\documentclass{amsart}

\usepackage[colorlinks,linkcolor={blue},citecolor={blue},urlcolor={red},]{hyperref}

\usepackage{amssymb, amsmath}
\usepackage{mathrsfs}
\usepackage{amscd}
\usepackage{verbatim}

\usepackage{enumerate}

\usepackage[subrefformat=parens,labelformat=parens,labelfont=rm]{subfig}
\captionsetup[subfigure]{indention=1.3em}



\usepackage[colorinlistoftodos,prependcaption,textsize=tiny]{todonotes}


\theoremstyle{plain}
\newtheorem{theorem}{Theorem}[section]
\theoremstyle{remark}

\theoremstyle{plain}

\newtheorem{lemma}[theorem]{Lemma}
\newtheorem{proposition}[theorem]{Proposition}

\numberwithin{equation}{section}


\def\N{{\mathbb N}}

\def\R{{\mathbb R}}
\def\C{{\mathbb C}}

\def\m2{{L^2_{\mu}(E)}}

\def\Ent{\mathsf{Ent}}

\newcommand{\g}{\gamma}

\newcommand{\ga}{\gamma_{\alpha}}

\renewcommand{\L}{\mathcal{L}}
\newcommand{\U}{\mathcal{U}}

\newcommand{\om}{\omega}
\newcommand{\supp}{\text{supp}}

\allowdisplaybreaks

\begin{document}

\author{Antonio Agresti}
\address{Department of Mathematics Guido Castelnuovo\\ Sapienza University of Rome\\ P.le A. Moro 2\\ 00185 Roma\\ Italy.} \email{agresti@mat.uniroma1.it}

\author{Paola Loreti}
\address{SBAI Department\\ Sapienza University of Rome\\ Via Antonio Scarpa, 16\\ 00161 Roma\\ Italy.} \email{paola.loreti@sbai.uniroma1.it, daniela.sforza@sbai.uniroma1.it}

\author{Daniela Sforza}
\date\today

\keywords{Memory kernels, Ornstein-Uhlenbeck operators, entropy estimates, logarithmic Sobolev inequalities}

\title[Memory in entropy decay of Ornstein-Uhlenbeck operators]{Time memory effect in entropy decay \\ of Ornstein-Uhlenbeck operators}

\subjclass[2010]{45K05, 47G20, 54C70}%

\begin{abstract}
We investigate the effect of memory terms on the entropy decay of the solutions to equations with Ornstein-Uhlenbeck operators. 
Our assumptions on the memory kernels include Caputo-Fabrizio operators and, more generally, the stretched exponential functions.
We establish a sharp rate decay for the entropy. 
Examples and numerical simulations are also given to illustrate the results.
%
\end{abstract}

\maketitle

\setcounter{tocdepth}{1}

\section{Introduction}
\subsection{Statement of the problem}
We consider a diffusion equation with memory for Ornstein-Uhlenbeck operator 
\begin{equation}\label{eq:Volterra_diff}
u_t(x,t)+ \int_0^t k(t-\tau) u_\tau(x,\tau)d\tau=\Delta u(x,t)- \alpha x\cdot \nabla u(x,t),
\quad
x\in \R^d,\ t>0,
\end{equation}
where $\alpha$ is a positive constant. 

The novelty of the paper consists in taking  the kernel 
$k$ in \eqref{eq:Volterra_diff} satisfying the conditions
\begin{equation}
\label{ass:kernel}
k\in W^{1,1}_{loc}(0,\infty)\cap L^{1}(0,\infty), \quad  k\ \mbox{is non-negative and  non-increasing}.
\end{equation}
The stretched exponential functions 
\begin{equation}
\label{eq:caputo_fabrizio}
k(t)=\nu e^{-t^{\beta}},\qquad \nu,\beta>0,
\end{equation}
and, in particular for $\beta=1$,
the Caputo-Fabrizio operators 
satisfy \eqref{ass:kernel}. 
The aim of this paper is to establish sharp decay estimates for the entropy of the solution $u$ to \eqref{eq:Volterra_diff}, defined as
\begin{equation}
\label{eq:def_entropy_alpha}
\Ent (u(t)):= \int_{\R^d} u\ln u\ d\ga -\Big(\int_{\R^d} u\ d\ga\Big)\ln\Big(\int_{\R^d} u\ d\ga\Big),
\end{equation} 
where $d\ga$ is a Gaussian measure on $\R^d$, that is 
$$
d\ga(x):=\Big(\frac{\alpha}{2\pi}\Big)^{\frac{d}{2}} e^{-\frac{\alpha|x|^2}{2}}dx.
$$
Moreover, in order to illustrate our achievements, examples and numerical simulations are also given
when  the integral kernel $k$ is a stretched exponential function \eqref{eq:caputo_fabrizio}
and  $k$ is a power-law kernel
\begin{equation}
\label{eq:power_law}
k(t)=\nu(1+t)^{-\beta-1},\qquad \nu,\beta>0.
\end{equation}



\subsection{Motivations}
Equations with non-local time operators of parabolic type describe several phenomena related to  heat conduction with memory and diffusion processes, see e.g. \cite{Nunziato71,EvolutionaryIntegral}. Recently, there is an increasing attention to equations of the form \eqref{eq:Volterra_diff} where $k$ is not singular. 
The Caputo-Fabrizio operators cover the case of non singular kernels in the study of equation \eqref{eq:Volterra_diff}, see \cite{CF15}.  Those operators have been used to study hysteresis phenomena in materials \cite{CF17}, diffusion processes \cite{GALLAMRRAM16}, evolution of diseases  \cite{KHB19,SS19},  Fokker-Plank  equations \cite{DSG18,FJLB18}. Further applications can be found in \cite{FDA19,T19}. 
The class of kernels that we consider in this paper, see \eqref{ass:kernel}, include  Caputo-Fabrizio operators.
Besides Caputo-Fabrizio operators, our analysis covers also the so-called stretched exponential functions \cite{MauroMauro18}, see Section \ref{s:example}. 
The 
Ornstein-Ulbenbeck operator appears in many contexts related to probability and analysis \cite{H67}.  
%
%
%
%
Entropy estimates give informations on the qualitative behaviour of the solutions to \eqref{eq:Volterra_diff}. In absence of memory ($k\equiv 0$) it is well known that the entropy decay of solutions to \eqref{eq:Volterra_diff} is related to Logarithmic Sobolev Inequality, see \cite[Chapter 5]{MarkovGeometry}. More precisely, when $k\equiv 0$ the Logarithmic Sobolev Inequality for the Gaussian measure $d\ga$ on $\R^d$ is equivalent to the following decay estimate for the entropy:
\begin{equation}
\label{eq:decay_semigroup}
\Ent(u(t))\leq e^{-2\alpha t} \Ent(u_0),\qquad t\geq 0.
\end{equation}
To our knowledge, nothing is known about entropy estimates for \eqref{eq:Volterra_diff} in the general case $k\not\equiv 0$, besides the paper \cite{KP19}, where singular kernels are considered. To conclude, we remark that the main result of this paper extends to other differential operators, see Section \ref{s:conclusion}.

\subsection{Statement of the main results}
We  consider the integro-differential equation
\begin{equation}\label{eq:Volterra_diff0}
u_t(x,t)+ \int_0^t k(t-\tau) u_\tau(x,\tau)d\tau=\Delta u(x,t)- \alpha x\cdot \nabla u(x,t),
\ \ \ 
x\in \R^d,\ t>0,
\end{equation}
with the initial condition
\begin{equation}\label{eq:cauchy-datum}
u(x,0)=u_0(x),
\qquad
x\in \R^d,
\end{equation}
under the following assumptions on the integral kernel
\begin{equation}
\label{ass:kernel1}
k\in W^{1,1}_{loc}(0,\infty)\cap L^{1}(0,\infty), \quad  k\ \mbox{is non-negative and  non-increasing}.
\end{equation}
We prove an existence result.
\begin{theorem}[Well-posedness]
\label{t:wp}
Assume that $u_0$ belongs to the domain $D(L_\alpha)$ of the Ornstein-Uhlenbeck operator. Then, there exists a unique {\bf strong solution} $u\in C^1([0,\infty);L^2(\ga))\cap C([0,\infty);D(L_\alpha))$ to \eqref{eq:Volterra_diff0}--\eqref{eq:cauchy-datum}. 

Moreover, if $u_0\in L^2(\ga)$, $u_0\geq 0$ $d\ga$-- a.e., then there exists a unique {\bf weak solution} $u\in C([0,\infty);L^2(\ga))$ to \eqref{eq:Volterra_diff0}--\eqref{eq:cauchy-datum} such that $u(\cdot,t)\geq 0$ $d\ga$-- a.e. for any $t\geq 0$. 

\end{theorem}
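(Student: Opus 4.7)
The plan is to treat \eqref{eq:Volterra_diff0} as an abstract Volterra equation on the Hilbert space $L^2(\ga)$, driven by the Ornstein--Uhlenbeck operator $L_\alpha$. Recall that $L_\alpha$ is self-adjoint and non-positive on $L^2(\ga)$ and generates the Markov (in particular positivity-preserving and analytic) $C_0$-semigroup $T_\alpha(t)$. Assuming $k(0):=\lim_{t\to 0^+}k(t)<\infty$ (the setting of interest for non-singular kernels such as Caputo--Fabrizio and stretched exponentials), integration by parts in the memory term gives
\begin{equation*}
\int_0^t k(t-\tau)\, u_\tau(\tau)\, d\tau = k(0)\, u(t) - k(t)\, u_0 + \int_0^t k'(t-\tau)\, u(\tau)\, d\tau,
\end{equation*}
so \eqref{eq:Volterra_diff0} rewrites as $u'(t)=(L_\alpha - k(0)\Id)u(t) + k(t)\, u_0 - (k'\ast u)(t)$. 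Since $L_\alpha - k(0)\Id$ generates the contraction semigroup $S(t):=e^{-k(0)t}T_\alpha(t)$, Duhamel's principle yields the equivalent fixed-point formulation
\begin{equation*}
u(t) = S(t)u_0 + \int_0^t S(t-s)\bigl[k(s)\, u_0 - (k'\ast u)(s)\bigr]\, ds.
\end{equation*}

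For existence and uniqueness, the right-hand side defines a map $F$ on $C([0,T]; L^2(\ga))$. Using $\|S(t)\|\le 1$ and $\|k'\|_{L^1(0,T)}=k(0)-k(T)\to 0$ as $T\to 0^+$ (by the monotonicity of $k$), one checks that $F$ is a contraction for sufficiently small $T>0$. The Banach fixed-point theorem, together with iterative extension, yields a unique mild solution $u\in C([0,\infty);L^2(\ga))$. If moreover $u_0\in D(L_\alpha)$, differentiation of the fixed-point equation, combined with $k\in W^{1,1}_{\text{loc}}(0,\infty)\cap L^1(0,\infty)$ and the analyticity of $T_\alpha$, yields the additional regularity $u\in C^1([0,\infty);L^2(\ga))\cap C([0,\infty);D(L_\alpha))$ and then that $u$ satisfies \eqref{eq:Volterra_diff0}--\eqref{eq:cauchy-datum} in the strong sense, along the lines of the abstract Volterra theory of \cite{EvolutionaryIntegral}.

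For the non-negativity statement, I argue via the Picard iteration associated to $F$. Assume first $u_0\in D(L_\alpha)$ with $u_0\ge 0$, and set $u^{(0)}(t):=S(t)u_0$ and $u^{(n+1)}:=F(u^{(n)})$. Since $T_\alpha$ is Markov, $S(t)$ preserves non-negativity, so $u^{(0)}\ge 0$. Inductively, if $u^{(n)}\ge 0$, then $k(s)u_0\ge 0$ and
\begin{equation*}
-(k'\ast u^{(n)})(s)=\int_0^s (-k'(s-r))\, u^{(n)}(r)\, dr\ge 0,
\end{equation*}
because $-k'\ge 0$ (as $k$ is non-increasing); applying the positivity-preserving operator $S(t-s)$ inside the integral gives $u^{(n+1)}\ge 0$. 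Passing to the limit $n\to\infty$ shows $u\ge 0$ on $[0,\infty)$. For general $u_0\in L^2(\ga)$ with $u_0\ge 0$, approximate by $u_0^n:=T_\alpha(1/n)u_0\in D(L_\alpha)$, which is still non-negative by positivity-preservation of $T_\alpha$; the corresponding strong solutions $u^n$ are non-negative by the previous step and, by linearity and the contraction estimate, converge to $u$ in $C([0,T];L^2(\ga))$, so $u(\cdot,t)\ge 0$ $d\ga$-a.e.\ in the limit.

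The main obstacle I foresee is obtaining strong $D(L_\alpha)$-regularity for $u_0\in D(L_\alpha)$, which requires careful differentiation of the convolution $k'\ast u$ and a bootstrap using only $k'\in L^1_{\text{loc}}$. In contrast, once the Picard formalism is in place, non-negativity follows almost by inspection from the sign of $-k'$ built into the monotonicity hypothesis on $k$.
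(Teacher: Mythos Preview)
Your argument is correct and takes a genuinely different route from the paper. The paper does not integrate by parts in the memory term; instead it passes to the equivalent integral formulation $u(t)=u_0+\int_0^t b(t-s)L_\alpha u(s)\,ds$, where $b$ is the (completely positive) resolvent kernel of $k$ defined by $b+k*b=1$. It then checks that $b$ is of positive type and $1$-regular in the sense of Pr\"uss and invokes \cite[Theorem~3.1]{EvolutionaryIntegral} to produce the resolvent family; positivity is obtained by quoting \cite[Theorem~5]{Pruss87} for completely positive kernels. Your approach bypasses this machinery: rewriting the equation as $u'=(L_\alpha-k(0)\Id)u+k(t)u_0-(k'*u)$ and running a Picard iteration is elementary, and your positivity argument---the forcing $k(s)u_0-(k'*u^{(n)})(s)$ is nonnegative whenever $-k'\ge 0$ and $u^{(n)}\ge 0$, so each iterate stays nonnegative under the positive semigroup $S$---is more transparent than an appeal to abstract complete-positivity results. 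The trade-off is that the paper's route plugs into a ready-made framework and delivers the resolvent operator with its intertwining property $L_\alpha S(t)=S(t)L_\alpha$ for free, which is convenient later.

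On your stated worry about the $D(L_\alpha)$-regularity step: you do not need to differentiate $k'*u$ or invoke analyticity. Since $L_\alpha$ commutes with $S(t)$, your fixed-point map $F$ restricts to a contraction (with the same Lipschitz constant) on $C([0,T];D(L_\alpha))$ equipped with the graph norm; hence for $u_0\in D(L_\alpha)$ the unique fixed point lies in $C([0,\infty);D(L_\alpha))$, and then $u\in C^1([0,\infty);L^2(\ga))$ follows directly from the rewritten equation because its right-hand side is continuous in $L^2(\ga)$. Note also that, with the paper's convention that $W^{1,1}_{\mathrm{loc}}(0,\infty)$ means $W^{1,1}(0,T)$ for every $T>0$, the hypothesis already forces $k(0^+)<\infty$, so you need not add it as a separate assumption.
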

To show the entropy decay of solutions we have to bring in, for any $\mu>0$, the unique positive non-increasing solution  $s_{\mu}\in C^{1}([0,\infty))$ of the problem
\begin{equation}
\label{eq:s_alpha}
\dot{s}_{\mu}(t)+\int_0^t k(t-\tau)\dot{s}_{\mu}(\tau) \ d\tau+\mu s_{\mu}(t)=0,\ \ t\ge0,
\quad s_{\mu}(0)=1.
\end{equation}
\begin{theorem}[Entropy decay]
\label{t:decay}
For any $u_0\in L^2(\g_{\alpha})$, $u_0\geq 0$ $d\ga$-- a.e.,  the weak solution $u$ to \eqref{eq:Volterra_diff0}--\eqref{eq:cauchy-datum} satisfies
\begin{equation}\label{eq:dec+op}
\Ent(u(t))\leq s_{2\alpha}(t) \Ent(u_0), \qquad\forall t>0,
\end{equation}
where $s_{2\alpha}$ is the  solution of \eqref{eq:s_alpha} when $\mu=2\alpha$. 

In addition, the constant $2\alpha$ in \eqref{eq:dec+op} is optimal in the following sense: if, for some $\mu>0$, the estimate
\begin{equation*}
\Ent(u(t))\leq s_{\mu}(t) \Ent(u_0), \qquad\forall u_0\in H^1(\g_{\alpha}),\ u_0\geq 0\ d\ga-a.e.,  \ t>0,
\end{equation*}
holds, then $\mu\leq 2\alpha$. 
\end{theorem}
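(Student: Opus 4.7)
My plan is to reduce the entropy decay to a scalar Volterra differential inequality for $\Phi(t):=\Ent(u(t))$ and then compare with the profile $s_{2\alpha}$ from \eqref{eq:s_alpha}. Observe first that integrating \eqref{eq:Volterra_diff0} against $d\ga$ and solving the resulting scalar Volterra equation shows that the mass $\int u(\cdot,t)\,d\ga$ is conserved, so $\dot\Phi(t)=\tfrac{d}{dt}\int u\ln u\,d\ga$. The pointwise engine is the convexity estimate
\begin{equation*}
F'(u(t))\int_0^t k(t-\tau)\,u_\tau(\tau)\,d\tau \,\ge\, \int_0^t k(t-\tau)\,\tfrac{d}{d\tau}F(u(\tau))\,d\tau,
\end{equation*}
valid for every $F\in C^1(\R_+)$ convex and every $k$ satisfying \eqref{ass:kernel1}. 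It is proved by integration by parts, which rewrites both sides in the form $k(t)(u(t)-u(0))+\int_0^t|k'(t-\tau)|(u(t)-u(\tau))\,d\tau$, and then applying the tangent-line inequality $F'(a)(a-b)\ge F(a)-F(b)$ to the integrands (the sign conditions $k\ge 0$ and $k'\le 0$ are crucial here).

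I then specialise to $F(u)=u\ln u$, multiply \eqref{eq:Volterra_diff0} by $1+\ln u$ and integrate against $d\ga$. The Ornstein--Uhlenbeck integration by parts yields $\int(\Delta u-\alpha x\cdot\nabla u)(1+\ln u)\,d\ga=-\I(u(t))$ where $\I(u):=\int|\nabla u|^2/u\,d\ga$ is the Fisher information, while the convexity estimate (applied pointwise in $x$, then Fubini) gives $\int_{\R^d}(k*u_t)(1+\ln u)\,d\ga\ge (k*\dot\Phi)(t)$. Combining with the Gaussian logarithmic Sobolev inequality $\I(u)\ge 2\alpha\,\Ent(u)$ produces the scalar inequality
\begin{equation*}
\dot\Phi(t)+(k*\dot\Phi)(t)+2\alpha\,\Phi(t)\le 0,\qquad \Phi(0)=\Ent(u_0).
\end{equation*}
Setting $\psi(t):=s_{2\alpha}(t)\Ent(u_0)$, the difference $w:=\Phi-\psi$ vanishes at $0$ and satisfies $\dot w+k*\dot w+2\alpha w\le 0$. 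A comparison principle for Volterra integro-differential inequalities under \eqref{ass:kernel1}, most cleanly phrased as non-negativity of the resolvent whose Laplace transform is $[\lambda(1+\widehat k(\lambda))+2\alpha]^{-1}$, forces $w\le 0$, which is \eqref{eq:dec+op} on the dense class $u_0\in D(L_\alpha)$ where the integrations by parts are legitimate; a standard approximation based on Theorem \ref{t:wp} (together with lower semicontinuity of the entropy under $L^1(\ga)$ convergence) extends the bound to every non-negative $u_0\in L^2(\ga)$.

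For optimality, I would test the hypothesised bound on perturbations $u_0=1+\varepsilon h$ where $h\in H^1(\ga)$ is a first-order Hermite polynomial (say a suitable cut-off of $x_1$ ensuring $u_0\ge 0$). Since $L_\alpha h=-\alpha h$, linearity and uniqueness propagate this structure to $u(x,t)=1+\varepsilon s_\alpha(t)h(x)$ up to errors negligible in the limit $\varepsilon\to 0$. A Taylor expansion of $u\ln u$ around $u=1$ gives $\Ent(u(t))=\tfrac{\varepsilon^2}{2}\|h\|_{L^2(\ga)}^2\,s_\alpha(t)^2+o(\varepsilon^2)$, so $\Ent(u(t))/\Ent(u_0)\to s_\alpha(t)^2$ as $\varepsilon\to 0$. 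The assumption then forces $s_\alpha(t)^2\le s_\mu(t)$ for every $t>0$; since both sides equal $1$ at $t=0$ with right-derivatives $-2\alpha$ and $-\mu$ (read off from \eqref{eq:s_alpha} evaluated at $t=0$), matching first-order Taylor terms yields $\mu\le 2\alpha$. The step I expect to be most delicate is the Volterra comparison in the second paragraph: it is routine once non-negativity of the resolvent is taken for granted, but an elementary proof requires sign-chasing through the representation $(k*\dot w)(t)=k(0)w(t)-k(t)w(0)+\int_0^t k'(t-\tau)w(\tau)\,d\tau$, which is awkward precisely where $w$ changes sign.
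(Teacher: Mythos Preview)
Your derivation of the scalar Volterra inequality
\[
\dot\Phi(t)+(k*\dot\Phi)(t)+2\alpha\,\Phi(t)\le 0
\]
is essentially the paper's argument: mass conservation (Lemma~\ref{l:conservation}), the convexity inequality for $k*\dot u$ (Lemma~\ref{l:IdFundamental}), the integration by parts $\int \Phi'(u)L_\alpha u\,d\ga=-\I(u)$ (Lemma~\ref{le:integration_by_parts}), and the logarithmic Sobolev inequality. One point you gloss over is the strict positivity needed to make $\ln u$ and $1/u$ legitimate; the paper first treats $u_0\in D(L_\alpha)$ with $u_0\ge\varepsilon>0$, uses that the resolvent preserves this lower bound (Lemma~\ref{le:positivity}), and only afterwards approximates general $u_0\ge 0$ by $u_{0,k}+\tfrac1k$. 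For the comparison step the paper gives a self-contained proof (Lemma~\ref{le:comparison}): apply the same convexity inequality to $\Phi(y)=\tfrac12 y_+^2$ and $z=\Phi-\psi$, which yields $z_+^2+2C\,b*(z_+^2)\le 0$ directly. Your resolvent-positivity route is also correct---the relevant resolvent is $r=-\dot s_{2\alpha}/(2\alpha)\ge 0$ since $s_{2\alpha}$ is non-increasing---but it outsources the sign information to the theory of completely positive kernels, whereas the paper's argument reuses the very tool you already have.

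Your optimality argument, however, has a genuine gap and differs substantially from the paper. The first Hermite polynomial $h(x)=x_1$ is unbounded, so $1+\varepsilon h$ is \emph{never} non-negative, and the hypothesised bound cannot be applied to it. If you cut $h$ off to enforce $u_0\ge 0$, you lose the eigenfunction property, and the claim ``$u(x,t)=1+\varepsilon s_\alpha(t)h(x)$ up to negligible errors'' becomes the whole difficulty: by linearity the solution is exactly $1+\varepsilon\,S(t)h$, and you must show simultaneously that $S(t)h$ stays bounded (so the Taylor expansion of the entropy is uniform) and that $\|S(t)h\|_{L^2}^2\to s_\alpha(t)^2\|x_1\|_{L^2}^2$ along a sequence of cut-offs, with the cut-off level and $\varepsilon$ tied together. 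This can be made to work, but it is considerably more delicate than what you wrote. The paper avoids all of this: it simply differentiates the assumed bound at $t=0$. Since $\dot s_\mu(0)=-\mu$ from \eqref{eq:s_alpha} and, by a direct computation using the equation at $t=0$,
\[
\frac{d}{dt}\Ent(u(t))\Big|_{t=0}=\int_{\R^d}\Phi'(u_0)L_\alpha u_0\,d\ga=-\int_{\R^d}\frac{|\nabla u_0|^2}{u_0}\,d\ga,
\]
one obtains the Entropy--Fisher inequality $\Ent(u_0)\le \tfrac1\mu\int|\nabla u_0|^2/u_0\,d\ga$ for all admissible $u_0$; by Lemma~\ref{l:two_log_sob} this is equivalent to the logarithmic Sobolev inequality with constant $4/\mu$, and the known optimality of the constant $2/\alpha$ in \eqref{eq:LSI} forces $\mu\le 2\alpha$. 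This argument needs no explicit solutions and no asymptotics in $\varepsilon$.
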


%
%
%

\subsection{Comparison with the case without memory}
We observe that Theorem \ref{t:decay}  gives exactly the results in \cite[Chapter 5]{MarkovGeometry} when $k\equiv 0$. Moreover, it is worth noting that
the entropy decay rate of solutions to \eqref{eq:Volterra_diff0}--\eqref{eq:cauchy-datum} is  larger than the one of the case without memory. 
Indeed, if we differentiate the function  $e^{2\alpha t} s_{2\alpha}$, thanks to \eqref{eq:s_alpha} with $\mu=2\alpha$, we obtain
\begin{equation*}
\frac{d}{dt}\big({e^{2\alpha t} s_{2\alpha}}\big)(t)=- e^{2\alpha t}\int_0^t k(t-\tau)\dot{s}_{2\alpha}(\tau)\ d\tau,\qquad
 (e^{2\alpha t} s_{2\alpha})(0)=1.
\end{equation*}
Since $k$ is non-negative and $s_{2\alpha}$ is non-increasing we have $\frac{d}{dt}\big({e^{2\alpha t} s_{2\alpha}}\big)(t)\geq 0$.
Therefore
$
e^{-2\alpha t}\leq s_{2\alpha}(t).
$
So, if we compare \eqref{eq:decay_semigroup} and \eqref{eq:dec+op}, then the claim follows.
This is consistent with the physical meaning of the memory term in \eqref{eq:Volterra_diff0}, see \cite{CF15,CF17}.

\subsection{Comparison with the literature}
Theorems \ref{t:wp} and \ref{t:decay} give a contribution to understand time memory effect in entropy decay for a large class of kernels. In literature entropy estimates for fractional  equations have been considered in \cite{KP19}. Although the problem investigated in \cite{KP19} is different from \eqref{eq:Volterra_diff}, the arguments used in the proof of Theorem \ref{t:decay} have been adapted from the results proved in \cite{KSVZ16,VZ15}.

\subsection{Plan of the paper}
The paper is divided into four sections. In Section \ref{s:example} we examine the decay rates of the entropy for \eqref{eq:Volterra_diff} for the stretched exponential and power-law kernels.  Section \ref{s:proof} is devoted to the proofs of Theorems \ref{t:wp}  and \ref{t:decay}. We also introduce some preliminary notations and results regarding
the Ornstein-Uhlenbeck operator,  integral equations and Logarithmic Sobolev Inequality. 
Lastly, in Section \ref{s:conclusion}
we suggest some possible extensions of our results.

\section{Analysis of the decay rate $s_{2\alpha}$}
\label{s:example}
In this section we examine the behaviour of the functions $s_{2\alpha}(t)$ that govern the entropy decay of the solutions to \eqref{eq:Volterra_diff0}--\eqref{eq:cauchy-datum}, see Theorem \ref{t:decay}, for some type of
kernels satisfiying \eqref{ass:kernel1}.

\subsection{Stretched exponential and power-law kernels}
\label{ex:exponential}
To study equation \eqref{eq:s_alpha} for $\mu=2\alpha$,
we  implement standard numerical methods. More precisely, fix $T>0$ and divide $[0,T]$ into $N$ steps of length $\Delta t$. Let us denote by $s_{n}$ the numerical solution of \eqref{eq:s_alpha} at time $t_{n}:=n\Delta t$, $n=0,\dots,N$. The numerical scheme is obtained by using finite differences to approximate the derivatives
\begin{equation*}
	\dot{s}(t) \simeq \frac{s_{n+1}-s_{n}}{\Delta t}
\end{equation*}
 and the composite trapezoidal formula \cite[Chapter 9]{QSS10} to approximate the integral term. Indeed,
 \begin{align*}
& 	\int_{0}^{t} k(t-\tau)\dot s(\tau) d\tau
 	 \simeq \Delta t\Big(\frac{k(t_{n})\dot s({0})+k({0})\dot s(t_{n})}{2}+
 	 \sum_{j=1}^{n-1}k(t_{n}-t_{j})\dot s(t_{j})\Big)\\
	&\simeq \Delta t\Big[\frac{1}{2}\Big(-k(t_{n})2\alpha+
	k({0})\frac{s_{n+1}-s_{n}}{\Delta t}\Big)+\sum_{j=1}^{n-1}k(t_{n}-t_{j})\frac{s_{j+1}-s_{j}}{\Delta t}\Big]
\end{align*}
where $n=0,\dots,N-1$ and we have used that $\dot{s}(0)=-2\alpha$, by \eqref{eq:s_alpha}. Inserting the above approximation in \eqref{eq:s_alpha}, we obtain the following numerical scheme 
\begin{equation}
\begin{aligned}
\label{eq:numerical_scheme}
s_{n+1} = \frac{2\Delta t}{2+k(0)\Delta t}&\Big[s_{n}\Big(\frac{1}{\Delta t}-2\alpha+\frac{k(0)}{2}\Big)
\\& +k(t_{n})2\alpha-\sum_{j=1}^{n-1}k(t_{n}-t_{j})(s_{j+1}-s_{j})\Big],
\end{aligned}
\end{equation}
where $n=0,\dots,N-1$.

We analyse the solutions of equation \eqref{eq:s_alpha} in the case of the stretched exponential functions \eqref{eq:caputo_fabrizio}, see Figure \ref{fig:stretched} below. 

\subsubsection{Stretched exponential kernels.} 
In Figure \ref{fig:stretched} we compare the behaviour of $s_{2\alpha}$ with the case $k\equiv 0$ by varying the parameters $\beta,\nu$ and $\alpha$. 
In Figure \subref*{fig:comparison} we set $\beta=1$, thus the numerical solution coincides with \eqref{eq:s_mu_exponential} and it presents a slower decay than $e^{-2t}$, which corresponds to the case $k\equiv 0$. In the remaining plots we compare the decays varying one parameter out of the above mentioned three. We observe that increasing $\beta$ and $\alpha$ we obtain a stronger decays (cf. Figures \subref*{fig:beta} and \subref*{fig:alpha}), while we have the opposite behaviour changing $\nu$  (Figure \subref*{fig:nu}).

In the special case $\beta=1$, we obtain the explicit expression for the solution. Indeed,
we study \eqref{eq:s_alpha} with $\mu=2\alpha$  and  $k(t)=\nu e^{-t}$,that is
\begin{equation*}
\dot{s}_{2\alpha}(t)+\nu\int_0^t e^{- (t-\tau)}\dot{s}_{2\alpha}(\tau)d\tau+2\alpha s_{2\alpha}(t)=0, \quad \text{ a.e. }  t>0,
\quad s_{2\alpha}(0)=1\,.
\end{equation*}
Multiplying  by $e^{ t}$, we can write
\begin{equation}\label{eq:beta1}
e^{ t}\dot{s}_{2\alpha}(t)+\nu\int_0^t e^{\tau}\dot{s}_{2\alpha}(\tau)d\tau+2\alpha e^{ t}s_{2\alpha}(t)=0 
\,.
\end{equation}
If we denote by $g(t)=e^{ t}s_{2\alpha}(t)$, then we note that $g(0)=1$, $e^{ t}\dot{s}_{2\alpha}(t)=\dot g(t)-g(t)$ and $\dot{g}(0)=1-2\alpha$. 
Therefore, the equation \eqref{eq:beta1} can be written in the form
\begin{equation*}
\dot g(t)+(2\alpha-1+\nu)g(t)- \nu\int_0^t g(\tau)d\tau-\nu=0 .
\end{equation*}
Differentiating the above equation we get
\begin{equation*}
\ddot g(t)+(2\alpha-1+\nu)\dot g(t)-\nu g(t)=0,
\end{equation*}
with initial conditions $g(0)=1$ and $\dot g(0)=1-2\alpha$.
Set
\begin{align*}
&\lambda_{\pm}=\frac{-(2\alpha-1+\nu)\pm \sqrt{(2\alpha-1+\nu)^2+4 \nu}}{2}, \\
&C_{+}=-\frac{\lambda_{-}+2\alpha-1}{\lambda_{+}-\lambda_{-}},\quad
C_{-}=\frac{\lambda_{+}+2\alpha-1}{\lambda_{+}-\lambda_{-}},
\end{align*}
we have
\begin{equation*}
g(t)=C_+ e^{\lambda_+ t} 
+C_- e^{\lambda_{-} t}.
\end{equation*}
Since $s_{2\alpha}(t)=e^{-t}g(t)$, we obtain
\begin{equation}
\label{eq:s_mu_exponential}
s_{2\alpha}(t)=C_+ e^{(\lambda_+-1)t}+C_{-} e^{(\lambda_- -1)t}, \qquad t>0.
\end{equation}
We also note that $\lambda_- - 1<-2\alpha<\lambda_+-1<0$. 

In conclusion, the expression \eqref{eq:s_mu_exponential} shows that the function $s_{2\alpha}(t)$ has an exponential behaviour, where the leading term $e^{(\lambda_+-1) t}$ depends on the kernel $k(t)=\nu e^{-t}$.

\begin{figure}[h!!]
\centering
\subfloat[][Case $\alpha=\nu=1$.]{\label{fig:comparison}
\includegraphics[width=0.47\linewidth]{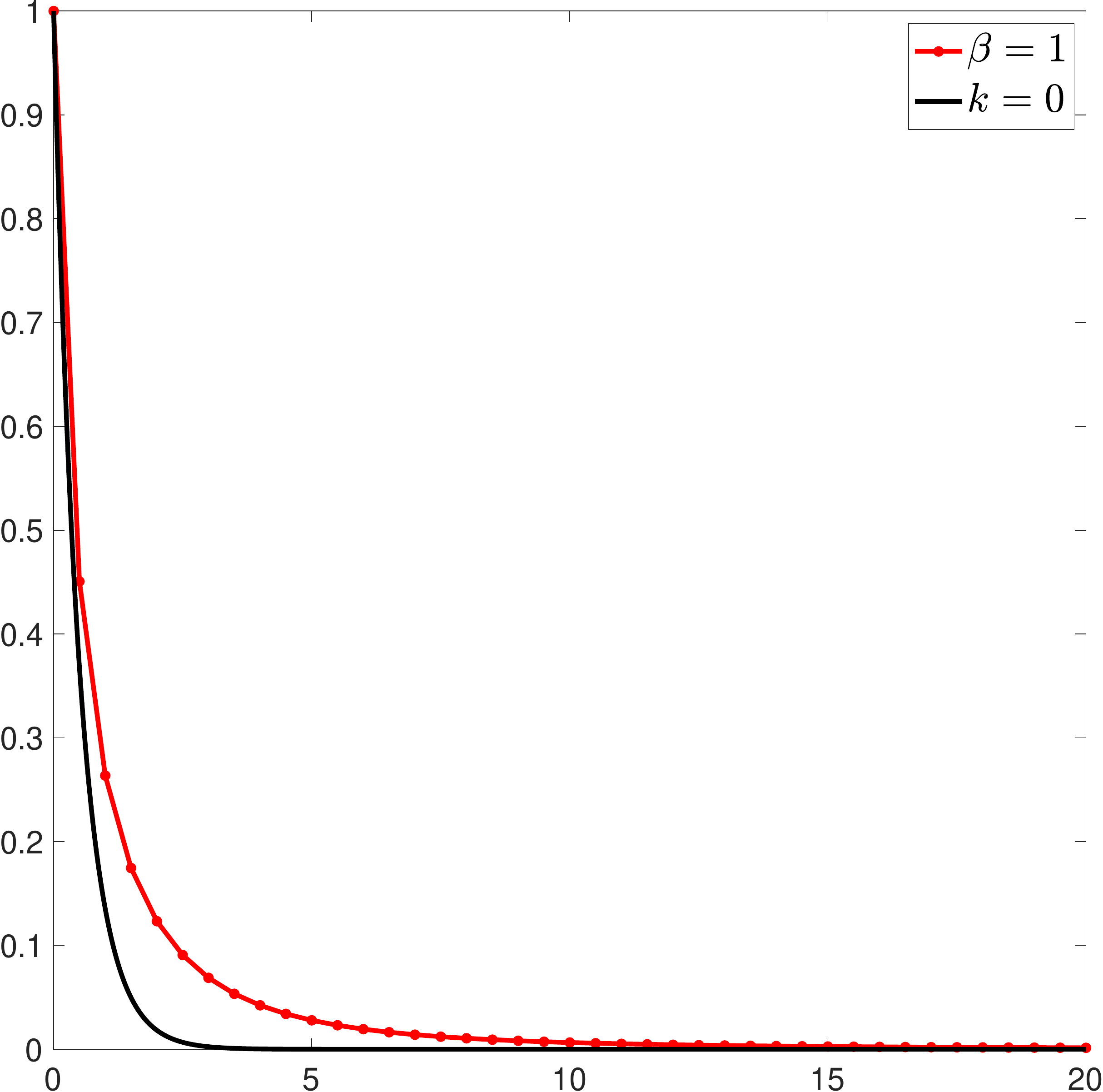}
}\,
\subfloat[][Case $\alpha=\nu=1$.]{\label{fig:beta}
\includegraphics[width=0.47\linewidth]{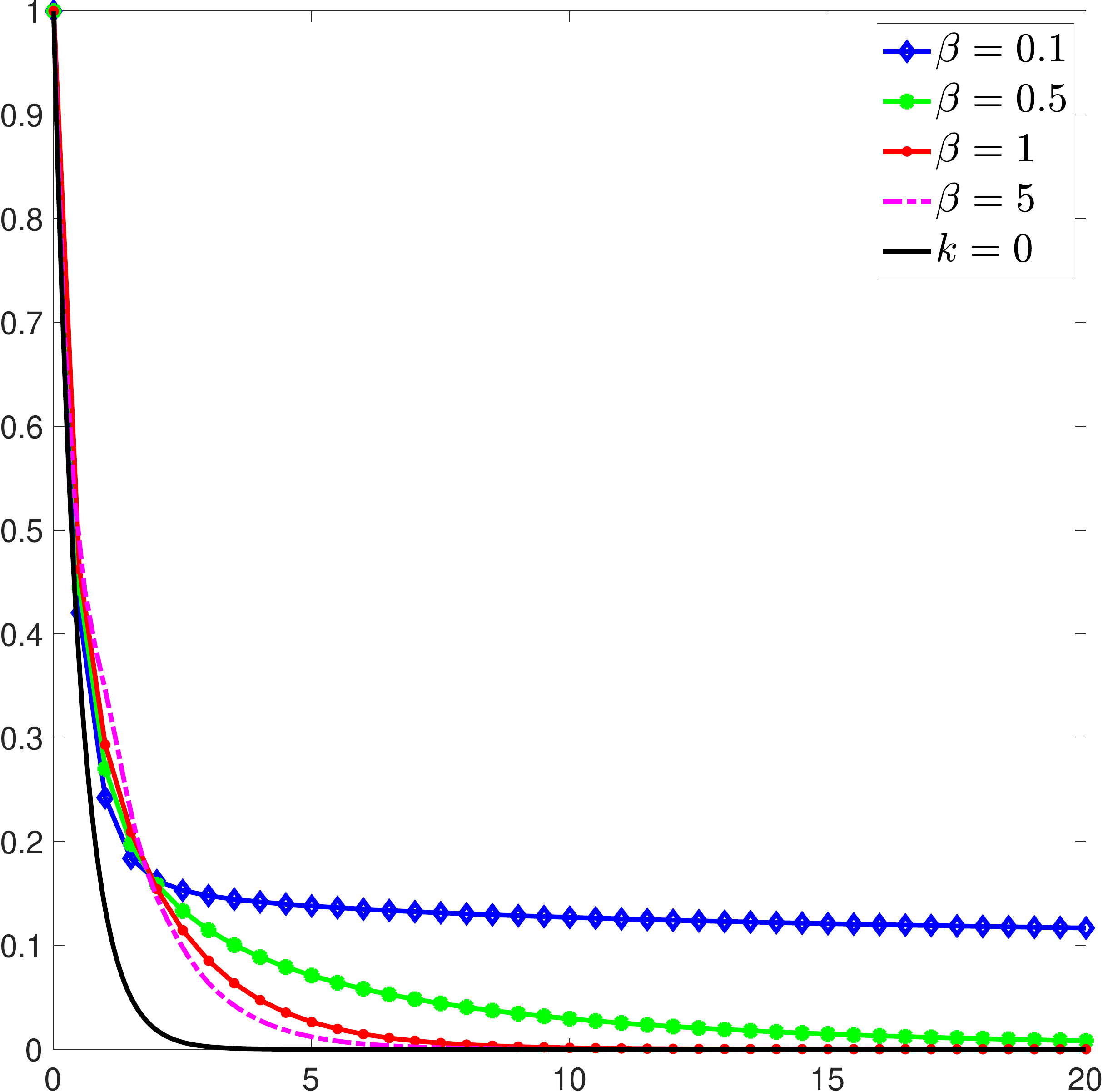}
}\\
\subfloat[][Case $\alpha=\beta=1$.]{\label{fig:nu}
\includegraphics[width=0.47\linewidth]{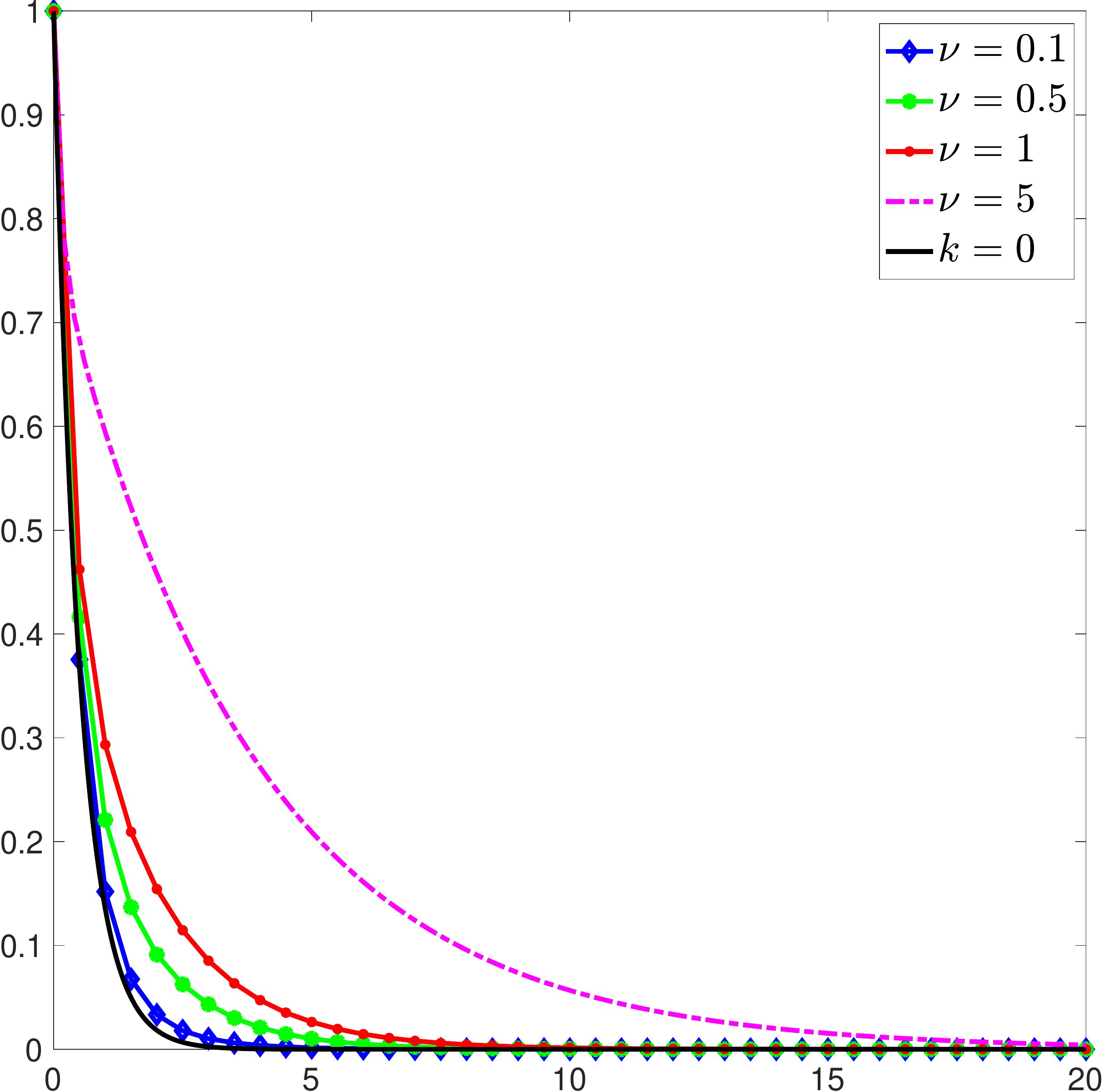}
}\,
\subfloat[][Case $\nu=\beta=1$.]{\label{fig:alpha}
\includegraphics[width=0.47\linewidth]{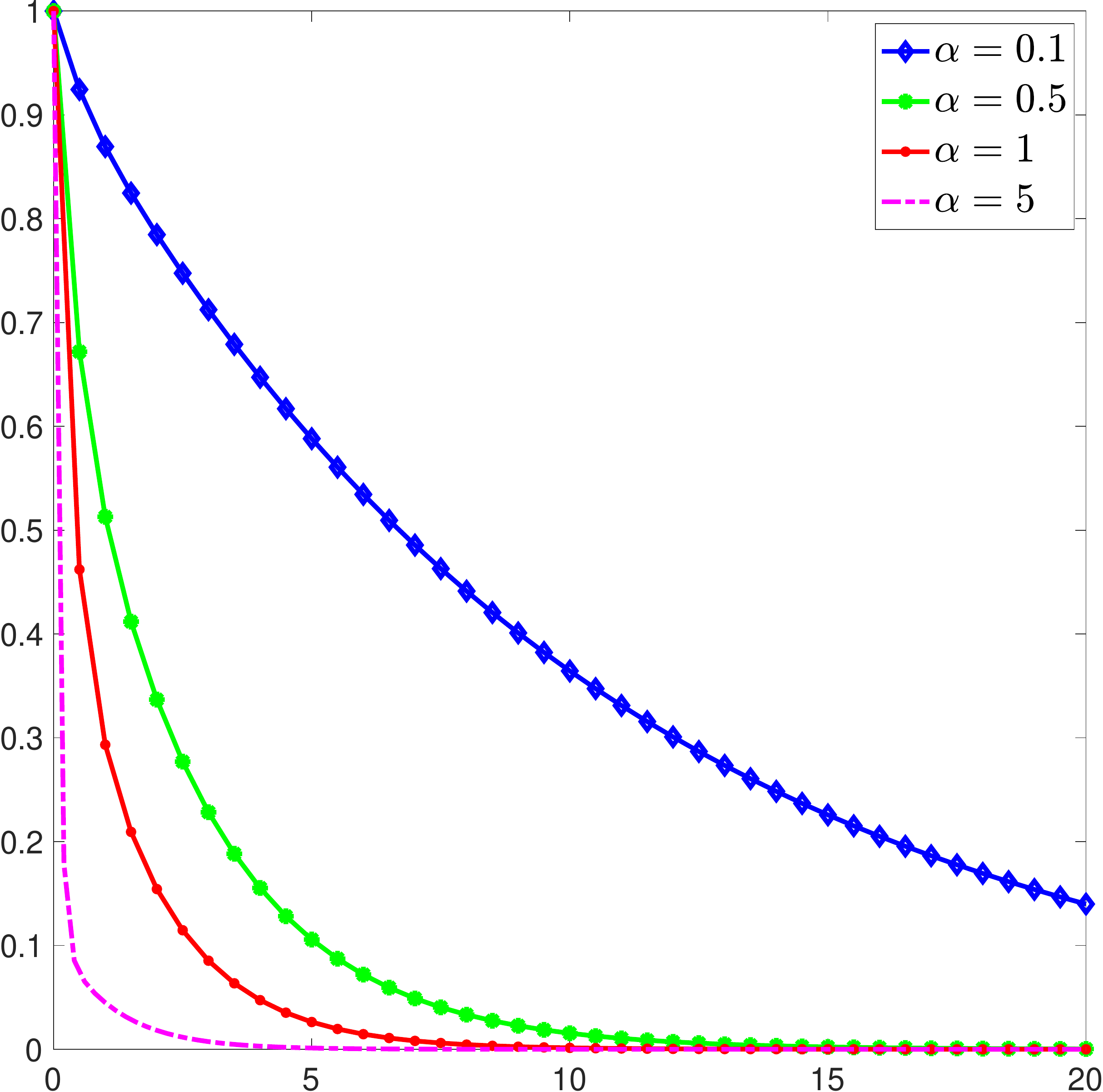}
}
\caption{Plots of $s_{2\alpha}$ for $k(t)=\nu e^{-t^{\beta}}$.}
\label{fig:stretched}
\end{figure}

\subsubsection{Power-law kernels}
We also implement the numerical scheme \eqref{eq:numerical_scheme} in the case $k=\nu(1+t)^{-\beta-1}$. As Figure \ref{fig:power_law} shows, the decay is faster with the increase of $\beta$ (Figures \subref*{fig:beta2}), while it is slower with the rise of $\nu$ (Figure \subref*{fig:nu2}).

\begin{figure}[h!]
\centering
\subfloat[][Case $\nu=\alpha=1$.]{\label{fig:beta2}
\includegraphics[width=0.47\linewidth]{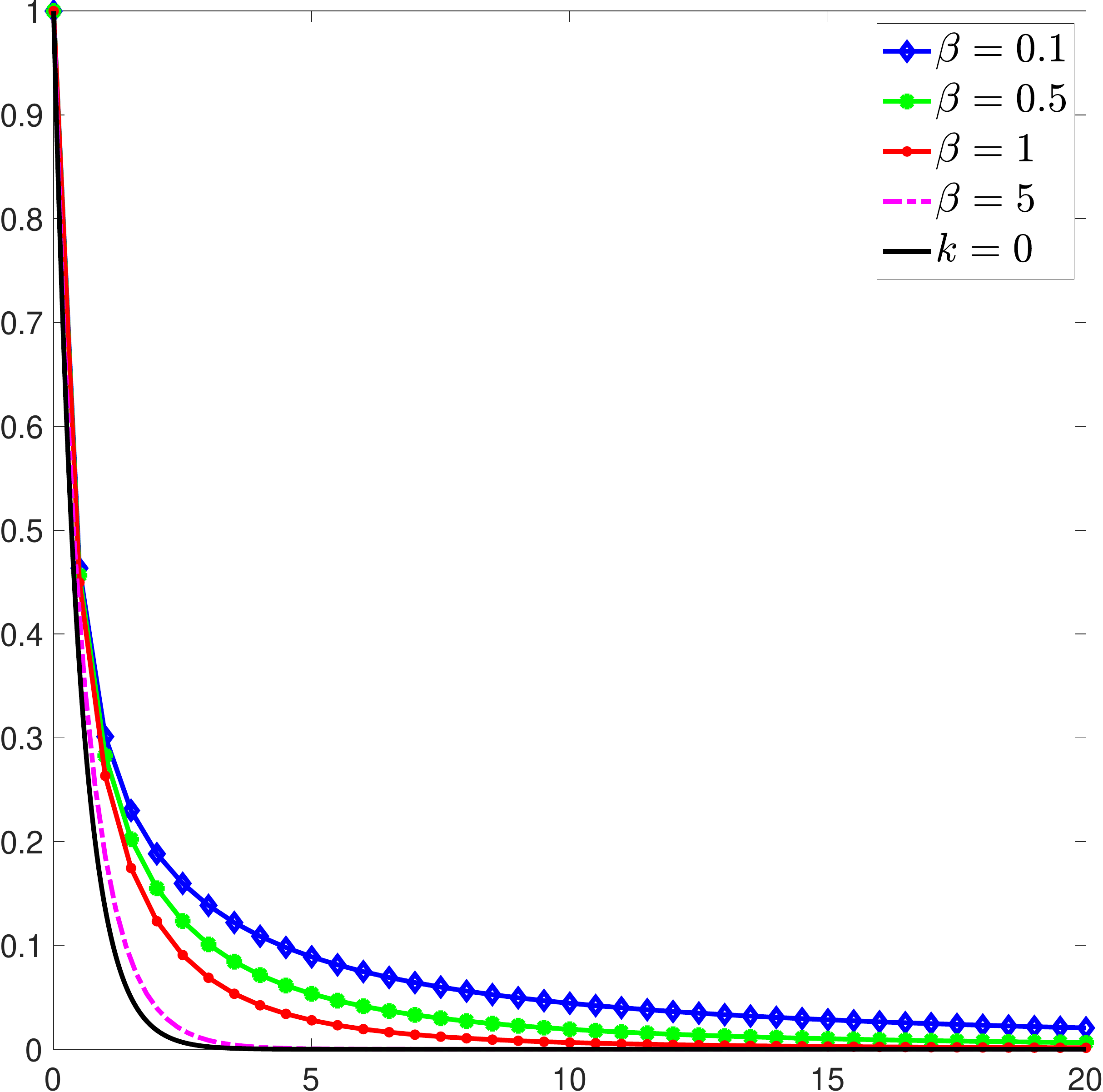}
}\,
\subfloat[][Case $\beta=\alpha=1$.]{\label{fig:nu2}
\includegraphics[width=0.47\linewidth]{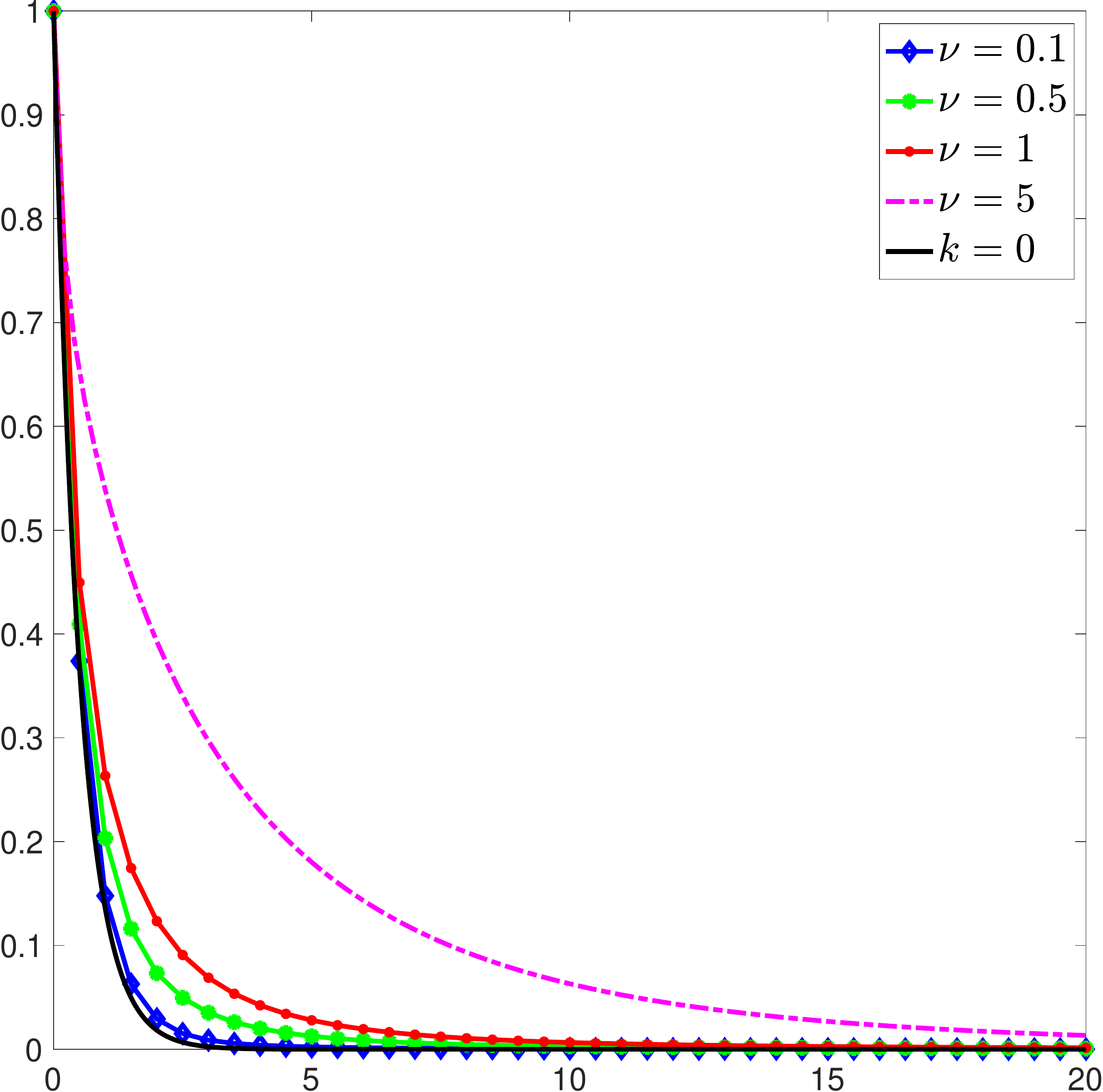}
}
\caption{Plots of $s_{2\alpha}$ for $k=\nu(1+t)^{-\beta-1}$.}
\label{fig:power_law}
\end{figure}

\section{Proof of Theorems \ref{t:wp} and \ref{t:decay}}
\label{s:proof}
To begin with, we introduce some notations and discuss some preliminary results. 
\subsection{The Ornstein-Uhlenbeck operator}
\label{ss:op_orns_uhl}
We denote by 
$$
\ga(x)=\left(\frac{\alpha}{2\pi}\right)^{\frac{d}{2}} e^{-\frac{\alpha|x|^2}{2}}
$$ 
a Gaussian distribution on $\R^d$ and by $d\ga(x)=\ga(x)dx$ the associated probability measure. For $\alpha=1$ we use the notation $\g=\ga$.
$L^2(\ga)$ is the space of measurable functions $f:\R^d\to \R$ such that $\int_{\R^d} |f|^2 d\ga<\infty$, endowed with the usual scalar product $(\cdot,\cdot)_{L^2(\ga)}$ and norm
$
\|\cdot\|_{L^2(\ga)}.
$
$H^{1}(\ga)$ denotes the space of functions $f\in L^2(\ga)$ such that $\nabla f\in L^2(\ga)$, endowed with the norm 
$$
\|f\|_{H^{1}(\ga)}:=\|f\|_{L^2(\ga)}+\|\nabla f\|_{L^2(\ga)}.
$$
There are several ways to introduce the Ornstein-Uhlenbeck operator on $L^2(\ga)$. Following \cite{Gross75},
we consider the bilinear symmetric form $\L_{\alpha}: H^{1}(\ga)\times H^{1}(\ga)\to \R$ defined by
$$
\L_{\alpha}(f,g):=\int_{\R^d} \nabla f \cdot \nabla g\ d\ga,
\qquad
f,g\in H^{1}(\ga).
$$
$\L_{\alpha}$ induces the operator $L_{\alpha}$ on $L^2(\ga)$ defined by
\begin{equation}
\label{eq:def_L_alpha}
\begin{aligned}
D(L_{\alpha})&=\{f\in H^{1}(\ga)\,:\,\Delta f-\alpha x\cdot \nabla f\in L^2(\ga)\},\\
L_{\alpha} f&=\Delta f- \alpha x\cdot \nabla f, \qquad f\in D(L_{\alpha}),
\end{aligned}
\end{equation}
that satisfies
$$
\L_{\alpha}(f,g)=-(L_{\alpha}f,g)_{L^2(\ga)},  \qquad \forall g\in H^{1}(\ga),f\in D(L_{\alpha}).
$$
$L_{\alpha}$ is the so-called Ornstein-Uhlenbeck operator. We recall that $L_{\alpha}$ is a negative self-adjoint operator, that  
generates a positive analytic semigroup $(T(t))_{t\geq 0}$ on $L^2(\ga)$, see e.g. \cite[Section 2.7.1]{MarkovGeometry}.

For completeness we state and prove an
integration by parts formula that will be useful in the sequel.

\begin{lemma}
\label{le:integration_by_parts}
Let $L_{\alpha}$ be the Ornstein-Uhlenbeck operator. The following properties hold.
\begin{itemize}
\item[(i)] 
For any $f\in D(L_{\alpha})$ there exists a sequence $\{f_k\}$ of functions belonging to $C_c^{\infty}(\R^d)$ such that $\nabla f_k\xrightarrow [{\scriptscriptstyle k\to\infty}]{}  \nabla f$ 
and $L_{\alpha}(f_k)\xrightarrow[{\scriptscriptstyle k\to\infty}]{}  L_{\alpha} f$ in $L^2(\ga)$.
\item[(ii)]  
Assume $f\in D(L_{\alpha})$, $\U\subset \R$ an open set
 and $\Phi:\U\to \R$ a $C^1$-function. For any  $g\in H^{1}(\ga)$ such that $g(x)\in \U$ $\ga$-a.e. on $\R^d$, $\Phi(g)\in L^2(\ga)$ and $\Phi'(g)\in L^{\infty}(\ga)$ we have
\begin{equation}\label{eq:integration_by_parts}
\int_{\R^d} L_{\alpha}f\ \Phi(g)d\ga= -\int_{\R^d} \Phi'(g) \nabla f	\cdot \nabla g\ d\ga.
\end{equation}
\end{itemize}
\end{lemma}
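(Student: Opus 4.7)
The plan is to prove (i) by a truncation-then-mollification argument showing that $C_c^\infty(\R^d)$ is a core for $L_\alpha$, and then deduce (ii) by writing the Gaussian integration by parts on the smooth compactly supported approximants and passing to the limit.

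For (i), fix $\chi\in C_c^\infty(\R^d)$ with $\chi\equiv 1$ on $B_1$ and $\chi\equiv 0$ outside $B_2$, set $\chi_R(x):=\chi(x/R)$, and observe that $|\nabla\chi_R|\le C/R$ and $|\Delta\chi_R|\le C/R^2$, both supported in the annulus $A_R:=\{R\le |x|\le 2R\}$. For $f\in D(L_\alpha)$, a direct computation gives
\begin{align*}
\nabla(\chi_R f)&=\chi_R\nabla f+f\nabla\chi_R,\\
L_\alpha(\chi_R f)&=\chi_R L_\alpha f+f\Delta\chi_R+2\nabla\chi_R\cdot\nabla f-\alpha f\,x\cdot\nabla\chi_R.
\end{align*}
The delicate contribution is the Ornstein-Uhlenbeck drift term $\alpha f\, x\cdot\nabla\chi_R$; however $|\alpha\,x\cdot\nabla\chi_R|\le 2\alpha\|\nabla\chi\|_\infty$ uniformly in $R$ on $A_R$, so every remainder term is pointwise dominated by a multiple of $|f|+|\nabla f|\in L^2(\ga)$ and is supported in $A_R$. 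Since $A_R$ escapes to infinity, dominated convergence yields $\nabla(\chi_R f)\to\nabla f$ and $L_\alpha(\chi_R f)\to L_\alpha f$ in $L^2(\ga)$ as $R\to\infty$. Each $\chi_R f$ has compact support, belongs to $H^1(\R^d)$, and satisfies $\Delta(\chi_R f)=L_\alpha(\chi_R f)+\alpha x\cdot\nabla(\chi_R f)\in L^2(\R^d)$. Convolving with a standard mollifier $\rho_\varepsilon$ produces $C_c^\infty$ functions whose gradient and Laplacian (hence also $L_\alpha$, since $\alpha x$ is smooth and bounded on the slightly enlarged compact support) converge in $L^2(dx)$, and therefore in $L^2(\ga)$ since $\ga$ is bounded on that compact set. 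A diagonal extraction over $R$ and $\varepsilon$ delivers the desired sequence $\{f_k\}$.

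For (ii), pick $\{f_k\}\subset C_c^\infty(\R^d)$ as in (i). The Sobolev chain rule ensures $\Phi(g)\in H^1(\ga)$ with $\nabla\Phi(g)=\Phi'(g)\nabla g$ and $|\nabla\Phi(g)|\le \|\Phi'(g)\|_{L^\infty(\ga)}|\nabla g|\in L^2(\ga)$; this follows from a Stampacchia-type argument after composing $\Phi$ with a suitable Lipschitz extension that agrees with $\Phi$ on the essential range of $g$. For each $f_k$, writing $d\ga=\ga(x)\,dx$ and using $\nabla\ga=-\alpha x\,\ga$, the classical integration by parts gives
\[
\int_{\R^d}\Delta f_k\,\Phi(g)\,\ga\,dx=-\int_{\R^d}\Phi'(g)\nabla f_k\cdot\nabla g\,\ga\,dx+\alpha\int_{\R^d}x\cdot\nabla f_k\,\Phi(g)\,\ga\,dx,
\]
whence, rearranging, $\int L_\alpha f_k\,\Phi(g)\,d\ga=-\int \Phi'(g)\nabla f_k\cdot\nabla g\,d\ga$. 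Cauchy-Schwarz, the convergences $L_\alpha f_k\to L_\alpha f$ and $\nabla f_k\to\nabla f$ in $L^2(\ga)$ from (i), and the memberships $\Phi(g),\Phi'(g)\nabla g\in L^2(\ga)$ let us pass to the limit $k\to\infty$ and obtain \eqref{eq:integration_by_parts}.

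The main obstacle is the density statement (i): the Ornstein-Uhlenbeck drift $-\alpha x\cdot\nabla$ has an unbounded coefficient, so one must rule out that the truncation introduces a term $\alpha x f\nabla\chi_R$ which is \emph{a priori} not small. The key observation that resolves this is that $x\cdot\nabla\chi_R$ remains uniformly bounded in $R$, because $\nabla\chi_R$ is supported where $|x|\sim R$ and decays like $1/R$. Once this is noted, (ii) becomes a routine limit of the classical Gaussian integration by parts formula.
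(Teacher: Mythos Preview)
Your proof is correct and follows the same approach as the paper: cut-off then mollify for (i), then for (ii) reduce to $f\in C_c^\infty(\R^d)$ via (i) and perform the Gaussian integration by parts on a ball containing the support. The paper's own proof is extremely terse (for (i) it merely says ``the usual techniques of convolution and cut-off''), so your explicit verification that the drift remainder $\alpha f\,x\cdot\nabla\chi_R$ stays uniformly bounded and is supported on the escaping annulus is a welcome clarification rather than a departure.
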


\begin{proof}
(i) The statement follows by means of the usual techniques of convolution and cut-off.

\noindent
(ii) By (i)  and the  fact that $\Phi(g)\in L^2(\ga)$ and $\Phi'(g)\in L^{\infty}(\ga)$, it is enough to prove \eqref{eq:integration_by_parts} when $f$ belongs to $ C^{\infty}_c(\R^d)$.
Indeed,
choose $R>0$ such that $\supp(f)\subset B_R:=\{x\in\R^d; |x|\leq R\}$, then
\begin{align*}
\int_{\R^d} L_{\alpha}f\ \Phi(g)d\ga
&= \int_{B_{R}} \Delta f\ \Phi(g)d\ga- \alpha\int_{B_{R}}  x\cdot \nabla f\ \Phi(g)d\ga\\
&=- \int_{B_{R}} \Phi'(g) \nabla f\cdot \nabla g\ d\ga,
\end{align*}
that is \eqref{eq:integration_by_parts}.
\end{proof}

\subsection{Evolutionary integral equations}\label{se:EIE}
\label{ss:evolutionary_integral_equations}
The purpose of this section is to recall some well-known notions and results about  integral equations.

We denote by $L_{loc}^1 (0,\infty)$ (resp. $W_{\rm loc}^{1,1}(0,\infty)$, $W_{\rm loc}^{2,1}(0,\infty)$) the space of functions
belonging to
$L^1(0,T)$ (resp. $W^{1,1}(0,T)$, $W^{2,1}(0,T)$) for any $T\in (0,\infty)$.

For any $k,f\in L_{\rm loc}^{1}(0,\infty)$ the symbol $k*f$
stands
for convolution
from $0$ to $t$, that is
$$
k * f\,(t)=\int_0^t k (t-s)f(s)\, ds,\qquad t\ge 0.
$$

As usual, the Laplace transform of a function $f\in L^1_{loc}(0,\infty)$ having sub-exponential growth (i.e. for all $\om>0$, $\int_{0}^{\infty} e^{-\om t} |f(t)|dt<\infty$)  will be denoted by 
$$
\widehat{f}(\lambda):=\int_0^\infty e^{-\lambda t}f(t)dt
\qquad
\lambda\in \C,\ \Re \lambda>0.
$$  
Classical results for integral equations
(see, e.g., \cite[Theorem 2.3.5]{GLS90})
ensure that, for any  kernel $k\in L_{loc}^1(0,\infty)$ and any $g\in
L_{loc}^{1} (0,\infty)$,  the problem
\begin{equation}\label{integral}
f(t)+k*f(t)=g(t),\qquad t\ge 0\,,
\end{equation}
admits a unique solution $f\in L_{loc}^{1}(0,\infty)$. Moreover, if $g\in
W_{loc}^{1,1} (0,\infty)$ (resp. $W_{\rm loc}^{2,1}(0,\infty)$), then we have $f\in W_{loc}^{1,1}(0,\infty)$ (resp. $W_{\rm loc}^{2,1}(0,\infty)$) too.

It is useful to recall the following result, see \cite[Lemma 1.3]{JJL}. 
 \begin{lemma}\label{le:Levin}
 If $k\in L^{1}_{loc}(0,\infty)$ is non-negative and  non-increasing and $g\in L^{1}_{loc}(0,\infty)$ is non-negative and  non-decreasing, then the solution $\varphi$ of the integral equation \eqref{integral} satisfies
 \begin{equation}
0\le \varphi(t)\le g(t)
\qquad \mbox{for a.e.}\ t\ge0\,.
\end{equation}

 \end{lemma}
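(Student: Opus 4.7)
The plan is to establish $\varphi \ge 0$ first, and then deduce $\varphi \le g$ by a duality trick applied to the equation satisfied by $g-\varphi$.

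For $\varphi \ge 0$, my plan is to reduce to the smooth case $k,g \in C^1$ via a one-sided mollification (convolving with a non-negative mollifier supported in $[-\varepsilon,0]$): this preserves both non-negativity and monotonicity of $k$ and $g$, and the $L^1_{loc}$-continuous dependence of Volterra equations recalled in Section~\ref{se:EIE} transfers positivity to the limit. In the smooth regime, differentiating $\varphi + k*\varphi = g$ gives
\[
\varphi'(t) + k(0)\varphi(t) \;=\; g'(t) + \int_0^t \bigl(-k'(t-s)\bigr)\varphi(s)\,ds.
\]
The key point is that the right-hand side is non-negative whenever $\varphi \ge 0$ on $[0,t]$, since $g' \ge 0$ and $-k' \ge 0$ by the monotonicity hypotheses in \eqref{ass:kernel1}. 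A bootstrap based on $T^* := \sup\{T : \varphi \ge 0 \text{ on } [0,T]\}$ then gives $T^*=+\infty$: on $[0,T^*]$ one multiplies by $e^{k(0)t}$ and integrates to obtain $\varphi(t)\ge e^{-k(0)t}g(0)\ge 0$, so $\varphi(T^*)\ge 0$, and continuity allows extending past $T^*$. The degenerate case $g(0)=0$, where this estimate does not produce strict positivity, is handled by the perturbation $g\mapsto g+\varepsilon$ and sending $\varepsilon\to 0^+$.

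For the upper bound, set $\psi := g-\varphi$; subtracting \eqref{integral} from itself yields
\[
\psi + k*\psi \;=\; k*g.
\]
Now $k*g\ge 0$, and writing $(k*g)(t)=\int_0^t k(s)g(t-s)\,ds$ shows that $k*g$ is non-decreasing in $t$, because $g(t-s)$ is so pointwise in $s$ and $k\ge 0$. Hence the right-hand side satisfies the very same hypotheses imposed on $g$ in the statement, and the non-negativity step applied to this auxiliary equation yields $\psi\ge 0$, i.e., $\varphi\le g$.

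The main obstacle is the bootstrap step when $g(0)=0$, where $\varphi$ might in principle touch zero without the Gronwall estimate alone ruling out a subsequent sign change. The perturbation argument is the cleanest fix, provided one checks that the Volterra stability recalled in Section~\ref{se:EIE} is strong enough to pass to the limit as $\varepsilon\to 0^+$; the linearity of \eqref{integral} and uniqueness in $L^1_{loc}$ make this routine but it does have to be verified.
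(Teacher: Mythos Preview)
The paper does not prove this lemma: it is stated with a bare citation to \cite[Lemma~1.3]{JJL} and no argument is given. So there is no in-paper proof to compare against.

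Your argument is correct and self-contained. The differentiation step
\[
\varphi'(t)+k(0)\varphi(t)=g'(t)+\int_0^t\bigl(-k'(t-s)\bigr)\varphi(s)\,ds
\]
is the right identity in the smooth regime, and the Gronwall/bootstrap you sketch yields $\varphi(t)\ge e^{-k(0)t}g(0)$ on $[0,T^*]$, forcing $T^*=\infty$ when $g(0)>0$; the $g(0)=0$ case is cleanly handled by your perturbation $g\mapsto g+\varepsilon$ thanks to linearity. The duality trick for the upper bound is particularly neat: $\psi=g-\varphi$ does satisfy $\psi+k*\psi=k*g$, and your check that $k*g$ is again non-negative and non-decreasing (via $(k*g)(t)=\int_0^t k(s)g(t-s)\,ds$) is exactly what is needed to feed it back into the lower-bound step.

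The one point to tighten is the one you already flag: the mollification and $\varepsilon$-perturbation steps require continuous dependence of the Volterra solution on \emph{both} the kernel $k$ and the datum $g$ in $L^1_{\mathrm{loc}}$. Section~\ref{se:EIE} of the paper only records existence, uniqueness, and regularity, not stability; the stability is classical (e.g.\ via the resolvent-kernel representation in \cite{GLS90}), but since you invoke it twice you should state it once explicitly rather than pointing to a section that does not contain it.
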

 Given $b\in
L^1_{loc}(0,\infty)$, recall that $b$ is a kernel {\em  of positive type} if
\begin{equation}\label{postype}
\int_0^T b*v(t)v(t)\ dt\ge 0\,,
\qquad\mbox{for any}\ T>0\,,\, v\in L^2(0,T).
\end{equation}
If $b\in
L^\infty(0,\infty)$,  $b$ is of positive type if and only if
\begin{equation}\label{postype1}
\Re\;\widehat b(\lambda)\ge 0\,\qquad\mbox{for any}\,\lambda\in\C\,,\,\,
\Re\lambda>0
\end{equation}
(see, e.g., \cite[p.38]{EvolutionaryIntegral}).

Also, $b$ is said to be a {\em completely positive} kernel if there exists $ k\in W^{1,1}_{loc}(0,\infty)$ non-negative and non-increasing such that
\begin{equation}\label{compos}
b(t)+\int_0^t k(t-s)b(s)ds=1, \qquad  t\ge 0.
\end{equation}

\begin{lemma}\label{le:composk}
If $b$ is a completely positive kernel, then we have
\begin{itemize}
\item[(i)]
$ b\in W^{2,1}_{loc}(0,\infty)$;
\quad
$0\le b(t)\le1$
\ \ \  $\forall t\ge0$.
\item[(ii)] If $k$ is the function in \eqref{compos}, then we have
\begin{equation}\label{eq:Laplace}
\widehat{b}(\lambda)=\frac{1}{\lambda(1+\widehat{k}(\lambda))}, \qquad \Re\lambda>0.
\end{equation}
\item[(iii)] $b$ is a kernel of positive type.
\item[(iv)] For any $u_0\in\R$ and $f\in C([0,\infty))$,  $u\in C([0,\infty))$ is given by
\begin{equation}\label{eq:b*}
u(t)=u_0+b*f(t),
\quad t\ge0,
\end{equation}
if and only if $u\in C^1([0,\infty))$ and satisfies 
\begin{equation}\label{eq:b*1}
\begin{cases}
\dot u+ k*\dot u(t)=f(t), \;\; \ t\ge0, \cr
u(0)=u_0\,.
\end{cases}
\end{equation}
\end{itemize}

\end{lemma}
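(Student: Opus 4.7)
The plan is to dispose of (i), (ii), and (iv) by routine Volterra manipulations and to focus the work on (iii), which is the main obstacle.

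For (i), read \eqref{compos} as a Volterra equation with right-hand side $g\equiv 1$ and apply the regularity statement recalled just before Lemma \ref{le:Levin}: since $g\in W^{2,1}_{loc}(0,\infty)$ and $k\in L^{1}_{loc}(0,\infty)$, the unique solution $b$ lies in $W^{2,1}_{loc}(0,\infty)$. The sandwich $0\le b(t)\le 1$ is Lemma \ref{le:Levin} itself, applied with the non-negative, non-decreasing datum $g\equiv 1$. For (ii), $b$ is bounded by (i) and $k$ is bounded (non-increasing with finite $k(0^+)$ by $W^{1,1}_{loc}$-regularity), so both Laplace transforms exist on $\{\Re\lambda>0\}$; taking the Laplace transform of \eqref{compos} and using $\widehat{1}(\lambda)=1/\lambda$ together with the convolution theorem yields $(1+\widehat k(\lambda))\widehat b(\lambda)=1/\lambda$, which rearranges to \eqref{eq:Laplace}.

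For (iv), the key preliminary is the resolvent identity
\begin{equation*}
\dot b(t)+k(t)+(k*\dot b)(t)=0,\qquad t>0,
\end{equation*}
obtained by differentiating \eqref{compos} and using $b(0)=1$ together with the regularity from (i). If $u=u_0+b*f$ with $f\in C([0,\infty))$, then $\dot u=f+\dot b*f$ (because $b(0)=1$); convolving $f$ with the resolvent identity and using commutativity of $*$ collapses $\dot u+k*\dot u$ to $f$, which is \eqref{eq:b*1}. Conversely, convolving \eqref{eq:b*1} with $b$ and invoking $b+b*k=1$ (read off from \eqref{compos}) reduces the left-hand side to $u-u_0$, recovering \eqref{eq:b*}.

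The hard step is (iii). Since $b\in L^\infty(0,\infty)$ by (i), the characterisation \eqref{postype1} applies, so it suffices to verify $\Re\widehat b(\lambda)\ge 0$ for $\Re\lambda>0$; by (ii) this is equivalent to $\Re[\lambda(1+\widehat k(\lambda))]\ge 0$. An integration by parts in $\widehat k$ gives
\begin{equation*}
\lambda(1+\widehat k(\lambda))=\lambda+k(0)-\widehat{(-\dot k)}(\lambda),
\end{equation*}
the boundary terms vanishing because $k$ is bounded and has a finite limit at infinity. Using $-\dot k\ge 0$ together with $\int_0^\infty(-\dot k)\,dt=k(0)-k(\infty)\le k(0)$ one obtains $|\widehat{(-\dot k)}(\lambda)|\le k(0)$, and therefore
\begin{equation*}
\Re[\lambda(1+\widehat k(\lambda))]\ge \Re\lambda+k(0)-k(0)=\Re\lambda>0.
\end{equation*}
The delicate point is precisely this last chain: the monotonicity of $k$, entering only through the sign of $-\dot k$, must dominate the potentially negative imaginary contributions that appear when $\lambda$ is non-real, and the crude bound by $k(0)$ is what buys this.
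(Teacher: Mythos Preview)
Your proof is correct and follows essentially the same route as the paper: (i) via Lemma~\ref{le:Levin} and the quoted Volterra regularity, (ii) by Laplace-transforming \eqref{compos}, (iv) by differentiating/convolving against the resolvent relation \eqref{compos}, and (iii) via the characterisation \eqref{postype1} combined with an integration by parts in $\widehat k$ that exploits $\dot k\le 0$. Your handling of (iii) is marginally cleaner---you bound $|\widehat{(-\dot k)}(\lambda)|\le k(0)$ in one stroke, whereas the paper splits $\Re\widehat b(\lambda)$ into real and imaginary components before performing the same integration by parts---but the substance is identical.
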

\begin{proof}
(i) Let $ k\in W^{1,1}_{loc}(0,\infty)$ the non-negative and non-increasing function such that \eqref{compos} holds.
We can apply Lemma \ref{le:Levin} with $g(t)\equiv1$ to obtain $0\le b(t)\le1$ for any $ t\ge0$. 

\noindent
(ii) Thanks to (i) and $0\le k(t)\le k(0)$, $t\ge0$, we have $ b, k\in L^{\infty}(0,\infty)$. Therefore, taking the Laplace transform of equation \eqref{compos} we get \begin{equation*}
\widehat{b}(\lambda)\big(1+\widehat{k}(\lambda)\big)=\frac1\lambda, \qquad \forall \Re\lambda>0,
\end{equation*}
and hence $1+\widehat{k}(\lambda)\not=0$, $\Re\lambda>0$, and \eqref{eq:Laplace} holds.

\noindent
(iii) Since $ b\in L^{\infty}(0,\infty)$ we will prove \eqref{postype1}. Indeed, from \eqref{eq:Laplace} we deduce for $\Re\lambda>0$
\begin{equation*}
\Re\widehat{b}(\lambda)
=\frac{\Re\lambda+\Re\lambda\Re\widehat{k}(\lambda)-\Im\lambda\Im\widehat{k}(\lambda)}{\big|\lambda(1+\widehat{k}(\lambda))\big|^2}
\,.
\end{equation*}
Integrating by parts, we have
\begin{multline*}
\Re\lambda\Re\widehat{k}(\lambda)=\Re\lambda\int_0^\infty e^{-\Re\lambda t}\cos(\Im\lambda t)k(t)\ dt
=-\int_0^\infty \partial_t(e^{-\Re\lambda t})\cos(\Im\lambda t)k(t)\ dt
\\
=k(0)+\Im\lambda\Im\widehat{k}(\lambda)+\int_0^\infty e^{-\Re\lambda t}\cos(\Im\lambda t)\dot k(t)\ dt
\,.
\end{multline*}
Thanks to $\dot k(t)\le0$ we note that 
\begin{equation*}
k(0)+\int_0^\infty e^{-\Re\lambda t}\cos(\Im\lambda t)\dot k(t)\ dt
=\int_0^\infty \big(e^{-\Re\lambda t}\cos(\Im\lambda t)-1\big)\dot k(t)\ dt
\ge 0,
\end{equation*}
and hence
\begin{equation*}
\Re\lambda\Re\widehat{k}(\lambda)-\Im\lambda\Im\widehat{k}(\lambda)\ge 0,
\end{equation*}
that is 
$
\Re\widehat{b}(\lambda)>0
$
for $\Re\lambda>0$.

\noindent
(iv)  If $u$ is given by
\eqref{eq:b*}, then by $k*u$, using \eqref{compos} and differentiating, we obtain
\eqref{eq:b*1}. Vice versa, if we convolve the equation in \eqref{eq:b*1} with $b$ and apply \eqref{compos}
we get
\begin{equation*}
1*\dot u(t)=b*f(t),
\end{equation*}
hence we have \eqref{eq:b*}.
\end{proof}
Let us introduce the 
functions $s_{\mu}(t)$ associated to a completely positive kernel $b$. By \cite[Proposition 4.5]{EvolutionaryIntegral}, for any $\mu>0$ there exists a unique positive and non-increasing function $s_{\mu}\in C^{1}([0,\infty))$  such that
\begin{equation}
\label{eq:s_mu_integral}
s_{\mu}(t)+\mu b*s_{\mu}(t)=1,\qquad t\ge0.
\end{equation}
Thanks to Lemma \ref{le:composk}-(iv), equation \eqref{eq:s_mu_integral} can be written as
\begin{equation}
\label{eq:s_mu}
\dot{s}_{\mu}(t)+k*\dot{s}_{\mu}(t)+\mu s_{\mu}(t)=0, \quad \ t\ge0,
\quad s_{\mu}(0)=1\,.
\end{equation}

To estimate the entropy of the solutions to \eqref{eq:Volterra_diff}, for the non-local operator $ k*\dot{u}$
we need an identity, which looks like an analogue of the chain rule, see \cite{VZ15}. 


\begin{lemma}
\label{l:IdFundamental}
Assume $k\in W_{loc}^{1,1}(0,\infty)$.
Given $U$ an open subset of $\R$, $\Phi\in C^1(U)$ and $u\in W_{loc}^{1,1}(0,\infty)$, $u(t)\in U$ on $(0,\infty)$, 
then for $t\ge0$
\begin{itemize}
\item[(i)] $\displaystyle
\begin{aligned}[t]
&\Phi'(u(t)) (k*\dot{u})(t)
\\
=&k*\Big(\frac{d}{dt}\Phi(u)\Big)(t) 
+ \big(\Phi(u(0))- \Phi(u(t))+ \Phi'(u(t))(u(t)-u(0))\big)k(t)\\
&-\int_0^t \Big(\Phi(u(t-s))-\Phi(u(t))-\Phi'(u(t))\big(u(t-s)-u(t)\big)\Big)\dot{k}(s)ds.
\end{aligned}
$
\item [(ii)] For a non-negative and  non-increasing kernel $k$, assuming also that $\Phi$ is convex on $U$,  we have 
\begin{equation}
\label{eq:inequalityConvexity}
k*\Big(\frac{d}{dt}\Phi(u)\Big)(t) \leq \Phi'(u(t)) (k*\dot{u})(t),
\quad
t\ge0.
\end{equation}
\end{itemize}

\end{lemma}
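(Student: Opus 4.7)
The plan for (i) is to manipulate the right-hand side by integration by parts until it collapses to the left-hand side. Write
\[
G(t,s):=\Phi(u(t-s))-\Phi(u(t))-\Phi'(u(t))\bigl(u(t-s)-u(t)\bigr),
\]
so that the last term on the right-hand side is $-\int_0^t G(t,s)\dot k(s)\,ds$. Observe that $G(t,0)=0$ and
\[
G(t,t)=\Phi(u(0))-\Phi(u(t))+\Phi'(u(t))\bigl(u(t)-u(0)\bigr),
\]
which is precisely the bracket appearing in the pointwise correction term multiplied by $k(t)$. Since $\Phi\in C^1$ and $u\in W^{1,1}_{loc}$, the composition $\Phi\circ u$ is absolutely continuous and $\partial_s G(t,s)=\bigl[\Phi'(u(t))-\Phi'(u(t-s))\bigr]\dot u(t-s)$.

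Integration by parts in $s$ on $(0,t)$ then yields
\[
-\int_0^t G(t,s)\dot k(s)\,ds
=-G(t,t)k(t)+\int_0^t\bigl[\Phi'(u(t))-\Phi'(u(t-s))\bigr]\dot u(t-s)\,k(s)\,ds,
\]
and the boundary term $-G(t,t)k(t)$ cancels exactly with the pointwise correction $\bigl(\Phi(u(0))-\Phi(u(t))+\Phi'(u(t))(u(t)-u(0))\bigr)k(t)$. After this cancellation, the remaining three terms of the right-hand side read, after the change of variable $\sigma=t-s$ in the convolution,
\[
\int_0^t k(s)\Phi'(u(t-s))\dot u(t-s)\,ds+\int_0^t k(s)\bigl[\Phi'(u(t))-\Phi'(u(t-s))\bigr]\dot u(t-s)\,ds,
\]
and the $\Phi'(u(t-s))\dot u(t-s)$ pieces cancel, leaving $\Phi'(u(t))\int_0^t k(s)\dot u(t-s)\,ds=\Phi'(u(t))(k*\dot u)(t)$, which is the left-hand side. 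This proves (i).

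For (ii), recall that convexity of $\Phi$ means $\Phi(x)-\Phi(y)-\Phi'(y)(x-y)\ge 0$ for all $x,y\in U$. Applying this with $x=u(t-s)$, $y=u(t)$ shows $G(t,s)\ge 0$, and the particular case $s=t$ gives $G(t,t)\ge 0$. Combining with the sign hypotheses $k\ge 0$ and $\dot k\le 0$, the pointwise term $G(t,t)\,k(t)$ is non-negative, and the integral $-\int_0^t G(t,s)\dot k(s)\,ds$ is non-negative as well. Hence the right-hand side of (i) majorizes its first term $k*\bigl(\tfrac{d}{dt}\Phi(u)\bigr)(t)$, which is exactly inequality \eqref{eq:inequalityConvexity}.

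The only delicate step is justifying the integration by parts under the assumption $k\in W^{1,1}_{loc}$, $u\in W^{1,1}_{loc}$: both $s\mapsto k(s)$ and $s\mapsto G(t,s)$ are absolutely continuous on $[0,t]$ (the latter because $\Phi'\circ u$ is bounded on compact sub-intervals when $u$ is continuous, and $\dot u\in L^1_{loc}$), so the standard integration-by-parts formula for absolutely continuous functions applies. Beyond that, the argument is bookkeeping with signs, and I expect no serious obstacle.
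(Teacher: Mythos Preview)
Your proof is correct. The paper's own proof is essentially a citation: for (i) it records the identities $\frac{d}{dt}(k*u)=k*\dot u+k(t)u(0)$ and $\frac{d}{dt}(k*\Phi(u))=k*\bigl(\tfrac{d}{dt}\Phi(u)\bigr)+k(t)\Phi(u(0))$ and then invokes \cite[Lemma~2.2]{VZ15}, and for (ii) it refers to \cite[Corollary~6.1]{KSVZ16}. Your argument is a self-contained unpacking of exactly the same computation---the integration by parts in $s$ against $\dot k$ is the mechanism behind the cited identity, and your convexity step for (ii) is identical to what the paper sketches. So the approaches coincide; you simply write out what the references contain.
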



\begin{proof}
(i)
Due to the  assumptions, we have for  $t\ge0$
\begin{align*}
\frac{d}{dt}(k*u)(t)&=k*\dot{u}(t)+ k(t)u(0),\\
\frac{d}{dt}\big(k*\Phi(u)\big)(t)&=k*\Big(\frac{d}{dt} \Phi(u)\Big)(t)+k(t)\Phi(u(0)).
\end{align*}
The assertion follows by  \cite[Lemma 2.2]{VZ15} in virtue of the above identities.

\noindent (ii)
As in  \cite[Corollary 6.1]{KSVZ16}, by the convexity of $\Phi$, taking into account that $k\geq 0$ and $\dot{k}\leq 0$,  the last two terms on the right-hand side of the identity in (i) are non-negative, so \eqref{eq:inequalityConvexity} follows.
\end{proof}

We also need  a comparison result.

\begin{lemma}
\label{le:comparison}
Assume that $ k\in W^{1,1}_{loc}(0,\infty)$ is non-negative and non-increasing.  Suppose that $u,v\in W^{1,1}_{loc}(0,\infty)$ satisfy $v(0)\leq w(0)$ and  there exists a constant $C>0$ such that \begin{equation}\label{eq:comparison}
\dot{v}+ k*\dot{v} + C v\leq 0, 
\quad
\dot{w}+ k*\dot{w}+ C w\geq 0,
\ \ \text{on}\ (0,\infty).
\end{equation}
Then $v\leq w$ on $(0,\infty)$.
\end{lemma}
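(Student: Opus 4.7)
The plan is to reduce the comparison to showing that $\psi := \tfrac12\bigl((v-w)_+\bigr)^2$ vanishes identically, then collapse the resulting integro-differential inequality into a purely integral one via the completely positive kernel $b$ associated with $k$.

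First, I would set $z := v - w$, so that $z\in W^{1,1}_{loc}(0,\infty)$, $z(0)\le 0$, and by subtracting the two inequalities in \eqref{eq:comparison},
$$
\dot z + k*\dot z + C\, z \;\le\; 0 \quad\text{on } (0,\infty).
$$
The goal becomes $z_+\equiv 0$. To that end I would choose the $C^1$ convex function $\Phi(x):=\tfrac12 (x_+)^2$ (so $\Phi'(x)=x_+$), multiply the inequality above by $z_+\ge 0$, and invoke Lemma \ref{l:IdFundamental}(ii) to control the convolution term:
$$
z_+(t)\,(k*\dot z)(t)\;\ge\; k*\Bigl(\tfrac{d}{dt}\Phi(z)\Bigr)(t)\;=\;(k*\dot\psi)(t).
$$
Combining this with the pointwise identities $z_+\dot z=\dot\psi$ (a.e., by the Sobolev chain rule together with Stampacchia's theorem giving $\dot z=0$ a.e.\ on $\{z=0\}$) and $z_+ z=2\psi$, I obtain
$$
\dot\psi + k*\dot\psi + 2C\psi \;\le\; 0, \qquad \psi(0)=0, \qquad \psi\ge 0.
$$

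Next I would collapse this inequality by convolving with the completely positive kernel $b$ that satisfies $b+k*b=1$ (see Section \ref{ss:evolutionary_integral_equations}). Exploiting $b*k=1-b$, the convolution of $\dot\psi + k*\dot\psi$ against $b$ reduces to $b*\dot\psi+(1-b)*\dot\psi = 1*\dot\psi = \psi(t)-\psi(0)=\psi(t)$. Hence
$$
\psi(t) + 2C\,(b*\psi)(t)\;\le\; 0 \quad\text{for all } t\ge 0.
$$
Since $\psi\ge 0$, $C>0$, and $b\ge 0$ by Lemma \ref{le:composk}(i), the left-hand side is non-negative, forcing $\psi\equiv 0$; equivalently $z_+\equiv 0$, i.e.\ $v\le w$ on $[0,\infty)$.

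The step I expect to be the main obstacle is the chain-rule manipulation in the second paragraph: the natural candidate $\Phi(x)=\tfrac12 (x_+)^2$ is only $C^1$ (not $C^2$), so one needs to be careful in applying Lemma \ref{l:IdFundamental}(ii). Fortunately that lemma is already stated for merely $C^1$ convex $\Phi$, and the identification $z_+\dot z=\dot\psi$ is a standard consequence of the Sobolev chain rule. Once this technical point is cleared, the remainder of the argument is a short algebraic manipulation using the kernel $b$ and the sign properties from Lemma \ref{le:composk}.
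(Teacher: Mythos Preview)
Your proposal is correct and follows essentially the same route as the paper: set $z=v-w$, apply Lemma~\ref{l:IdFundamental}(ii) with the $C^1$ convex function $\Phi(y)=\tfrac12 y_+^2$ to obtain a differential inequality for $z_+^2$, then convolve with the completely positive kernel $b$ from \eqref{compos} and use $b\ge 0$ together with $z_+z=z_+^2$ to conclude. Your treatment is in fact slightly more careful than the paper's, since you make explicit the use of $\psi(0)=0$ (coming from $z(0)\le 0$) and the Sobolev chain rule justification for $z_+\dot z=\dot\psi$.
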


\begin{proof}
The idea is essentially given in \cite[Lemma 2.6]{VZ15}. 
Set $z=v-w$, we can apply \eqref{eq:inequalityConvexity} to the convex function $\Phi(y)=\frac{1}{2}y_+^2$, where $y_+:=\max\{y,0\}$, to get
\begin{equation*}
\frac{d}{dt}z_+^2+k*\left(\frac{d}{dt}z_+^2\right)(t) 
\leq 2z_+ \left(\dot{z}+k*\dot{z} \right).
\end{equation*}
By \eqref{eq:comparison} it follows 
\begin{equation*}
\dot{z}+ k*\dot{z} + C z\leq 0,
\end{equation*}
and hence
\begin{equation*}
\frac{d}{dt}z_+^2+k*\left(\frac{d}{dt}z_+^2\right)(t) + 2C\,z\,z_+\le0.
\end{equation*}
Convolving with $b$ and applying \eqref{compos1} we have
$$
z_+^2 + 2C b*(z\,z_+)\leq 0.
$$
Since by Lemma \ref{le:composk}-(i) $b$ is positive, thanks also to $z_+z=z_+^2$, it follows 
$$
z_+^2 \leq z_+^2 + 2C b*(z_+^2)\leq  0,
\quad \text{on}\ (0,\infty),
$$
whence $v\leq w$ on $(0,\infty)$. 
\end{proof}

\subsection{Entropy and Logarithmic Sobolev Inequality}
For $\alpha>0$ we denote by $d\ga$ the Gaussian measure on $\R^d$ defined as
$$
d\ga(x):=\Big(\frac{\alpha}{2\pi}\Big)^{\frac{d}{2}} e^{-\frac{\alpha|x|^2}{2}}dx,
$$
and set $d\gamma(x)=d\gamma_1(x)$.
As well known, for a non-negative measurable function $f:\R^d\to \R$ such that $\int_{\R^d} f|\ln f| d\ga<\infty$ ($0\ln 0:=0$) the entropy of $f$ is given by
\begin{equation}
\label{eq:def_entropy}
\Ent f:=\int_{\R^d} f\ln f d\ga - \Big(\int_{\R^d} f d\ga\Big)\ln \Big(\int_{\R^d} f d\ga\Big).
\end{equation}
Note that, by Jensen inequality applied to $x\ln x$, it follows that $\Ent f\geq 0$. Moreover,
$$
\Ent(c f) =c \,\Ent(f),\quad c>0.
$$

Let us recall the following Logarithmic Sobolev Inequality.
\begin{proposition}
\label{t:Log_Sob}
Let $f\in H^1(\ga)$ be. Then
\begin{equation}\label{eq:LSI}
\Ent (f^2)\leq\frac{2}{\alpha} \int_{\R^d} |\nabla f|^2 d\ga.
\end{equation}
In particular $f^2\ln( f^2)\in L^1(\ga)$.
Moreover, the constant in \eqref{eq:LSI} is optimal.
\end{proposition}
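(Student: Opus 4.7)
The plan is to prove \eqref{eq:LSI} via the semigroup method of Bakry-\'Emery, using the Ornstein-Uhlenbeck semigroup $(T(t))_{t\ge 0}$ generated by $L_\alpha$ on $L^2(\ga)$. By a truncation argument (replace $f$ by $\sqrt{f^2+\delta}$ and let $\delta\to 0^+$ at the end, using monotone convergence for the entropy and the pointwise bound $|\nabla\sqrt{f^2+\delta}|\le|\nabla f|$), I may assume $f$ smooth, bounded, and $f^2\ge\varepsilon>0$. Set $u(t):=T(t)(f^2)$; then $u(t)\ge\varepsilon$ and $u(t)\in L^\infty(\ga)$ for all $t\ge 0$, since $T(t)$ is Markov. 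By invariance of $\ga$ under $T(t)$, $\int u(t)\,d\ga=\int f^2\,d\ga$ for every $t$; by ergodicity, $u(t)\to\int f^2\,d\ga$ in $L^2(\ga)$ as $t\to\infty$, and dominated convergence then gives $\Ent(u(t))\to 0$.

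Since $u_t=L_\alpha u$, Lemma \ref{le:integration_by_parts}(ii) applied with $\Phi(y)=1+\ln y$ on $\mathcal U=(\varepsilon,\infty)$ (legitimate because $\Phi'(u(t))=1/u(t)\in L^\infty(\ga)$ and $\Phi(u(t))\in L^2(\ga)$) yields
\[
\frac{d}{dt}\Ent(u(t))=\int_{\R^d}L_\alpha u(t)\,(1+\ln u(t))\,d\ga=-\int_{\R^d}\frac{|\nabla u(t)|^2}{u(t)}\,d\ga.
\]
Integrating on $(0,\infty)$ and using $\Ent(u(t))\to 0$ produces the de Bruijn-type identity
\[
\Ent(f^2)=\int_0^\infty\!\!\int_{\R^d}\frac{|\nabla u(t)|^2}{u(t)}\,d\ga\,dt.
\]

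The crux of the argument, and the hardest technical point, is the pointwise commutation estimate
\[
\frac{|\nabla T(t)g|^2}{T(t)g}(x)\le e^{-2\alpha t}\,T(t)\!\Big(\tfrac{|\nabla g|^2}{g}\Big)(x),\qquad g>0,\ t\ge 0,
\]
which is the Bakry-\'Emery $\mathrm{CD}(\alpha,\infty)$ bound for $L_\alpha$. I would derive it from Mehler's formula, which yields $\nabla T(t)g=e^{-\alpha t}T(t)(\nabla g)$, combined with the pointwise Cauchy-Schwarz inequality $|T(t)(\nabla g)|^2\le T(t)(g)\cdot T(t)(|\nabla g|^2/g)$ (applied componentwise). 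Alternatively one can interpolate by differentiating $s\mapsto T(s)\big(|\nabla T(t-s)g|^2/T(t-s)g\big)$ and using the convexity of $(a,b)\mapsto b^2/a$ together with $\Gamma_2\ge\alpha\,\Gamma$. Applying this estimate with $g=f^2$, so that $|\nabla g|^2/g=4|\nabla f|^2$, and integrating against $d\ga$ (using invariance), we obtain $\int|\nabla u(t)|^2/u(t)\,d\ga\le 4e^{-2\alpha t}\int|\nabla f|^2\,d\ga$. Inserting this bound into the entropy identity and computing $\int_0^\infty 4e^{-2\alpha t}\,dt=2/\alpha$ gives \eqref{eq:LSI}. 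Optimality follows by testing on $f_\varepsilon(x):=1+\varepsilon x_1$: the invariants $\int x_1\,d\ga=0$ and $\int x_1^2\,d\ga=1/\alpha$ make both sides of \eqref{eq:LSI} equal $2\varepsilon^2/\alpha+O(\varepsilon^3)$ as $\varepsilon\to 0^+$, so no constant smaller than $2/\alpha$ is admissible.
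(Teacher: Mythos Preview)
Your argument is correct, but it takes a genuinely different route from the paper. The paper does not reprove the logarithmic Sobolev inequality at all: it simply cites Gross \cite{Gross75} (or \cite[Proposition~5.5.1]{MarkovGeometry}) for the case $\alpha=1$, and then obtains the general case by the change of variables $f_\alpha(x):=f(x/\sqrt{\alpha})$, which maps $H^1(\ga)$ into $H^1(\gamma)$ and turns \eqref{eq:LSI} for $\ga$ into the standard Gaussian inequality. Optimality is likewise inherited from the $\alpha=1$ case by the same scaling.

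By contrast, you give a self-contained Bakry--\'Emery proof: the de~Bruijn identity for $\Ent(T(t)f^2)$ combined with the commutation bound $|\nabla T(t)g|^2/T(t)g\le e^{-2\alpha t}T(t)(|\nabla g|^2/g)$, which you obtain either from Mehler's formula plus Cauchy--Schwarz or from the $\mathrm{CD}(\alpha,\infty)$ condition. This is more work but has the virtue of being independent of the literature and of displaying exactly the semigroup mechanism (entropy dissipation governed by Fisher information) that the paper later perturbs by a memory kernel; in that sense your proof is thematically closer to the rest of the article than the paper's own citation-and-scale argument. Your optimality test with $f_\varepsilon=1+\varepsilon x_1$ is also correct (and is the infinitesimal version of the exact extremals $e^{\lambda x_1}$); the paper instead just invokes optimality of the classical constant. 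One minor point: your reduction ``I may assume $f$ smooth, bounded, and $f^2\ge\varepsilon$'' conflates two steps---the shift $f\mapsto\sqrt{f^2+\delta}$ gives the lower bound, but smoothness and boundedness above require an additional mollification/cut-off; this is routine, but worth saying explicitly.
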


\begin{proof}
If $\alpha=1$ inequality \eqref{eq:LSI} becomes
\begin{equation}\label{eq:LSI1}
\Ent (f^2)\leq2\int_{\R^d} |\nabla f|^2 d\gamma.
\end{equation}
and the proof  can be found in \cite{Gross75}, see also \cite[Proposition 5.5.1]{MarkovGeometry}. 
In the general case $\alpha>0$, set $f_{\alpha}(x)=f(\frac x{\sqrt{\alpha}})$ we observe that 
if $f\in H^{1}(\ga)$, then $f_{\alpha}\in H^1(\g)$. Therefore, thanks also to \eqref{eq:LSI1}, we have
\begin{align*}
\Ent (f^2)
&=\int_{\R^d} f^2\ln (f^2) d\ga - \Big(\int_{\R^d} f^2d\ga \Big)\ln \Big(\int_{\R^d} f^2d\ga \Big) \\
&= \int_{\R^d} f_{\alpha}^2\ln (f_{\alpha}^2 )d\g - 
\Big(\int_{\R^d} f_{\alpha}^2 d\g \Big)\ln \Big(\int_{\R^d} f_{\alpha}^2d\g \Big)\\
&\leq 2 \int_{\R^d}|\nabla f_{\alpha}|^2 d\g 
=\frac{2}{\alpha}\int_{\R^d}\Big|\nabla f\left(\frac{x}{\sqrt{\alpha}}\right)\Big|^2 d\g
= \frac{2}{\alpha}\int_{\R^d}|\nabla f|^2 d\ga.
\end{align*}
The optimality of the constant in the general case $\alpha>0$ follows by the optimality in the case $\alpha=1$.
\end{proof}
The following result gives the formulation of the Logarithmic Sobolev Inequality in terms of the Fisher information 
$\displaystyle \int_{\R^d} \frac{|\nabla g|^2}{g} d\ga$, where $g\in H^1(\ga)$, $g\ge 0$ d$\ga$- a.e., see \cite[p. 237]{MarkovGeometry}.

\begin{lemma}
\label{l:two_log_sob}
Let $C>0$. The following assertions are equivalent.
\begin{itemize}
\item[(a)]
\label{it:Log_S} The Logarithmic Sobolev Inequality holds
$$
\Ent (f^2)\leq C  \int_{\R^d} |\nabla f|^2 d\ga,
\qquad \text{for any}\ f\in H^{1}(\ga).
$$
\item[(b)]
\label{it:Log_I} The Entropy-Fisher Information Inequality holds 
\begin{equation*}
\Ent( g)\leq \frac{C}{4}\int_{\R^d} \frac{|\nabla g|^2}{g} d\ga,
\qquad \text{for any}\ g\in H^1(\ga), \  g\geq 0\ d\ga- a.e..
\end{equation*}
\end{itemize}
\end{lemma}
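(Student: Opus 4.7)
The plan is to exchange the two inequalities via the substitution $g = f^2$ (equivalently $f = \sqrt{g}$), under which $|\nabla f|^2 = \frac{|\nabla g|^2}{4g}$ and $\Ent(f^2) = \Ent(g)$; the factor $\tfrac14$ is exactly what relates the constant $C$ in (a) to $C/4$ in (b), so the two statements are algebraically the same inequality. The only subtleties are to handle the zero set of $g$ (resp.\ of $f$), where $\sqrt{g}$ loses regularity, and to justify the resulting limits.

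For \emph{(a) $\Rightarrow$ (b)}, given a non-negative $g \in H^{1}(\ga)$ I would fix $\epsilon>0$, set $f_{\epsilon}:=\sqrt{g+\epsilon}$, and note that the Sobolev chain rule (applicable because $g+\epsilon\ge\epsilon>0$) gives $f_{\epsilon}\in H^{1}(\ga)$ with $\nabla f_{\epsilon}=\frac{\nabla g}{2\sqrt{g+\epsilon}}$. Substituting into (a) yields
\begin{equation*}
\Ent(g+\epsilon)\ \leq\ \frac{C}{4}\int_{\R^d}\frac{|\nabla g|^2}{g+\epsilon}\,d\ga.
\end{equation*}
Sending $\epsilon\to 0^{+}$, the right-hand side increases to $\frac{C}{4}\int_{\R^d}\frac{|\nabla g|^2}{g}\,d\ga$ by monotone convergence, exploiting the classical fact that $\nabla g=0$ $d\ga$-a.e.\ on $\{g=0\}$ so that the limit integrand is unambiguous (with the convention $0/0:=0$). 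The left-hand side converges to $\Ent(g)$ by dominated convergence applied separately to $(g+\epsilon)\ln(g+\epsilon)$ (which is uniformly controlled for $\epsilon\in(0,1]$) and to $\int(g+\epsilon)\,d\ga$.

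For \emph{(b) $\Rightarrow$ (a)}, given $f\in H^{1}(\ga)$ I would first reduce to $f\ge 0$ by replacing $f$ with $|f|$, which preserves $f^2$ and $|\nabla f|^2$ $d\ga$-a.e. To guarantee $f^2\in H^{1}(\ga)$ I would truncate at level $n$: $f_n:=\min(f,n)$, so that $g_{n,\epsilon}:=f_n^2+\epsilon\in H^{1}(\ga)\cap L^{\infty}(\ga)$ with $g_{n,\epsilon}>0$ and
\begin{equation*}
\frac{|\nabla g_{n,\epsilon}|^2}{g_{n,\epsilon}}\ =\ \frac{4f_n^2|\nabla f_n|^2}{f_n^2+\epsilon}\ \leq\ 4|\nabla f|^2.
\end{equation*}
Applying (b) to $g_{n,\epsilon}$ produces $\Ent(f_n^2+\epsilon)\le C\int_{\R^d}|\nabla f|^2\,d\ga$. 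Letting first $\epsilon\to 0^{+}$ (dominated convergence) and then $n\to\infty$ (Fatou on the left, using $f_n^2\uparrow f^2$) yields (a), noting that Proposition \ref{t:Log_Sob} already guarantees $f^2\ln f^2\in L^1(\ga)$ so $\Ent(f^2)$ is well-defined.

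The only genuine obstacle is the rigorous handling of the limits at $\{g=0\}$ (resp.\ $\{f=0\}$) and the verification that the regularized functions lie in $H^{1}(\ga)$; both are standard consequences of the Sobolev chain rule combined with the a.e.\ vanishing of $\nabla g$ on $\{g=0\}$.
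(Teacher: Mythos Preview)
The paper does not actually prove this lemma: it merely introduces it with the phrase ``see \cite[p.~237]{MarkovGeometry}'' and states the result without a proof environment. Your argument is the standard one underlying that reference---exchange (a) and (b) via the substitution $f=\sqrt{g}$, regularize by adding $\epsilon>0$ to stay away from the zero set, and pass to the limit using dominated/monotone convergence together with the fact that $\nabla g=0$ a.e.\ on $\{g=0\}$. The details you give (chain rule for $\sqrt{g+\epsilon}$, truncation $f_n=\min(|f|,n)$ to force $f_n^2\in H^1(\ga)$, the domination $|x\ln x|\le C(1+x^2)$) are exactly the right ingredients, and the proof is correct. One cosmetic remark: in the direction (b)$\Rightarrow$(a) your appeal to Proposition~\ref{t:Log_Sob} to ensure $f^2\ln f^2\in L^1(\ga)$ is legitimate in this paper's setting but is not strictly needed---the uniform bound $|f_n^2\ln f_n^2|\le C(1+f^4\wedge f^2\ln f^2)$ together with monotone convergence on the positive part already does the job.
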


%


\subsection{Proof of Theorem \ref{t:wp}.}
\label{s:entropy_decay0}
Here we establish the well-posedness  of the integro-differential problem
\begin{equation}
\label{eq:Volterra_diff1}
\begin{cases}
\dot u(t)+ k*\dot u(t)=L_{\alpha}u(t), \;\; t>0 \cr
u(0)=u_0\,.
\end{cases}
\end{equation}
where the kernel 
$k$ satisfies the conditions
\begin{equation}
\label{ass:kernel2}
k\in W^{1,1}_{loc}(0,\infty)\cap L^{1}(0,\infty), \quad  k\ \mbox{is non-negative and  non-increasing},
\end{equation}
and $L_{\alpha}$ is the Ornstein-Uhlenbeck operator defined by \eqref{eq:def_L_alpha}.
\begin{proof}[Proof of Theorem \ref{t:wp}]
Due to the assumption \eqref{ass:kernel2} on the kernel $k$,  the unique solution $b\in W^{2,1}_{loc}(0,\infty)$  of the integral equation 
\begin{equation}\label{compos1}
b(t)+\int_0^t k(t-s)b(s)ds=1, \qquad  t> 0,
\end{equation}
is a completely positive kernel, see Section \ref{se:EIE}.
By Lemma \ref{le:composk}-(iv) for any $u_0\in D(L_{\alpha})$ we have that $u\in C^1([0,\infty);L^2(\ga))\cap C([0,\infty);D(L_{\alpha}))$ is a solution of \eqref{eq:Volterra_diff1} if and only if $u\in C([0,\infty);D(L_{\alpha}))$ is the solution of the integral equation
\begin{equation}
\label{eq:Volterra_integral}
u(t)=u_0+\int_0^t b(t-s)L_{\alpha}u(s)ds, \qquad t\ge0.
\end{equation} 
Therefore, to solve \eqref{eq:Volterra_diff1}  it is sufficient to prove the well-posedness for \eqref{eq:Volterra_integral}. 
To this end, we show that there exists the resolvent for \eqref{eq:Volterra_integral}, that is
a family $\{S(t)\}_{t\geq 0}$  of linear bounded operators in $L^2(\ga)$  such that
\begin{enumerate}[{\rm(1)}]
\item $S(0)=I$ and  for  $u_0\in L^{2}(\ga)$ the map $t\mapsto S(t)u_0$ is  continuous;
\item for  $u_0\in D(L_{\alpha})$ and $t\geq 0$, one has $S(t) u_0\in D(L_{\alpha})$, $L_{\alpha}S(t) u_0= S(t) L_{\alpha}u_0$ and
\begin{equation}
\label{eq:Volterra_integral1}
S(t)u_0=u_0+\int_0^t b(t-s)L_{\alpha}S(s)u_0ds, \qquad t\ge0.
\end{equation} 
\end{enumerate}

First, we note that by Lemma \ref{le:composk}-(iii) $b$ is a kernel of positive type. Since $L_{\alpha}$ generates an analytic semigroup (see Subsection \ref{ss:op_orns_uhl}), we can apply \cite[Corollary 3.1]{EvolutionaryIntegral} to have that equation \eqref{eq:Volterra_integral} is parabolic.
Moreover, in order to apply \cite[Theorem 3.1]{EvolutionaryIntegral}, we have to show that $b$ is 1-regular, i.e. there exists $C>0$ such that $|\lambda\widehat{b}'(\lambda)|\leq C|\widehat{b}(\lambda)|$ for all $\Re \lambda>0$.
Indeed, thanks to  \eqref{eq:Laplace} we have
\begin{equation*}
 \frac{ \lambda\widehat{b}'(\lambda)}{\widehat{b}(\lambda)} 
= 
-\frac{1+(\lambda \widehat{k}(\lambda))'}{1+\widehat{k}(\lambda)}.
\end{equation*}
Now, also by an integration by parts we get 
\begin{equation*}
(\lambda \widehat{k}(\lambda))'=\widehat{k}(\lambda)-\lambda\int_0^\infty e^{-\lambda t}tk(t) dt
=
-\widehat{t\dot{k}}(\lambda)
\,,
\end{equation*}
and hence
%
$$
 \frac{ \lambda\widehat{b}'(\lambda)}{\widehat{b}(\lambda)} 
=\frac{\widehat{t\dot{k}}(\lambda)-1}{1+\widehat{k}(\lambda)}.
$$
To prove the boundedness of the right hand-side, thanks to $k\in L^1(0,\infty)$, by Riemann-Lebesgue lemma we have $\widehat{k}(\lambda)\to 0$ as $|\lambda|\to \infty$. This implies that 
$1+\widehat{k}(\lambda)$ is bounded from below on $\{\Re \lambda>0\}$.
In addition, integrating by parts we get
\begin{multline*}
|\widehat{t\dot{k}}(\lambda)|\le-\int_0^\infty e^{-\Re \lambda t}t\dot{k}(t)\ dt
\\
=-\Re \lambda\int_0^\infty e^{-\Re \lambda t}t k(t)\ dt+\int_0^\infty e^{-\Re \lambda t} k(t)\ dt\le\int_0^\infty k(t)\ dt
\quad \forall \;\Re \lambda>0.
\end{multline*}
Therefore we have that $b$ is 1-regular. By Theorem \cite[Theorem 3.1]{EvolutionaryIntegral} we deduce the existence of the resolvent for the integral equation \eqref{eq:Volterra_integral}, that is
a family $\{S(t)\}_{t\geq 0}$  of linear bounded operators in $L^2(\ga)$  satisfying the conditions $(1)-(2)$.
 In particular, for any $u_0\in D(L_{\alpha})$ the function $S(t)u_0$ is the solution  of \eqref{eq:Volterra_integral}, and hence $S(t)u_0$ is the strong solution of \eqref{eq:Volterra_diff1}. 

Moreover, if $u_0\in L^2(\ga)$ $S(t)u_0$ is the weak solution of \eqref{eq:Volterra_diff1}, since
\begin{equation*}
S(t)u_0=\lim_{k\to\infty}S(t)u_{0k}\quad \text{in}\ L^2(\ga),
\end{equation*}
for any sequence $\{u_{0k}\}$ in $D(L_{\alpha})$ such that $u_{0k}\xrightarrow[k]{} u_{0}$ in $L^2(\ga)$.



In addition, if we assume $u_0\geq 0$  $d\ga$-- a.e., since $b$ is a completely positive kernel and $L_{\alpha}$ generates a positive semigroup on $L^{2}(\ga)$, then by \cite[Theorem 5]{Pruss87} we have
$S(t)u_0\geq 0$ $d\ga$-- a.e., for any $t\geq 0$ . 
\end{proof}

\subsection{Proof of Theorem \ref{t:decay}}
In this subsection we show a sharp rate decay for the entropy of the solutions to problem \eqref{eq:Volterra_diff1} with the integral kernel $k$ satisfying \eqref{ass:kernel2}.

To prove the statement we need the following two lemmas.

\begin{lemma}\label{le:positivity}
For any $u_0\in\  L^2(\gamma_\alpha)$, $u_0\geq \varepsilon>0$ $d\ga$-- a.e., the weak solution $u$ to problem \eqref{eq:Volterra_diff1} satisfies 
$u(t)\geq \varepsilon$ $d\ga$-- a.e. for any $t\ge0$.
\end{lemma}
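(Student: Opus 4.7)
The plan is to reduce the claim to the positivity statement of the resolvent family $\{S(t)\}_{t\ge 0}$ already constructed in the proof of Theorem \ref{t:wp}, exploiting its linearity. The key observation is that the constant function $\mathbf{1}(x)\equiv 1$ belongs to $L^{2}(\ga)$ (since $\ga$ is a probability measure) and to $D(L_\alpha)$ with $L_\alpha\mathbf{1}=0$. Therefore the constant-in-time function $u(x,t)\equiv 1$ is a strong solution of \eqref{eq:Volterra_diff1} with datum $\mathbf{1}$, and uniqueness in Theorem \ref{t:wp} forces
\begin{equation*}
S(t)\mathbf{1}=\mathbf{1}\qquad\text{for every }t\ge 0.
\end{equation*}

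Given $u_0\in L^{2}(\ga)$ with $u_0\ge\varepsilon$ $d\ga$-a.e., I would decompose $u_0=(u_0-\varepsilon)+\varepsilon\,\mathbf{1}$. The first summand lies in $L^{2}(\ga)$ and is non-negative $d\ga$-a.e., so the positivity statement of Theorem \ref{t:wp} gives $S(t)(u_0-\varepsilon)\ge 0$ $d\ga$-a.e. Linearity of $S(t)$ together with $S(t)\mathbf{1}=\mathbf{1}$ then yields
\begin{equation*}
u(t)=S(t)u_0=S(t)(u_0-\varepsilon)+\varepsilon\,S(t)\mathbf{1}=S(t)(u_0-\varepsilon)+\varepsilon\ge\varepsilon
\end{equation*}
$d\ga$-a.e., which is exactly the desired conclusion.

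If one prefers a more pedestrian energy argument, one can set $w=u-\varepsilon$ (still a solution of \eqref{eq:Volterra_diff1} since $L_\alpha$ annihilates constants), first assuming $u_0\in D(L_\alpha)$ for regularity. Testing the equation against $-w_-$ in $L^{2}(\ga)$, one uses Lemma \ref{l:IdFundamental}(ii) with the convex $C^1$ function $\Phi(y)=\tfrac12 y_-^2$ to bound the memory contribution from below by $k*\tfrac{d}{dt}\Phi(w)$, and Lemma \ref{le:integration_by_parts} to rewrite $-\int w_- L_\alpha w\,d\ga=-\int|\nabla w_-|^2\,d\ga\le 0$. Setting $\varphi(t):=\tfrac12\|w_-(t)\|_{L^{2}(\ga)}^2$ one arrives at
\begin{equation*}
\dot\varphi+k*\dot\varphi\le 0,\qquad\varphi(0)=0,
\end{equation*}
and convolving with the completely positive kernel $b$ while using $b+k*b=1$ (exactly as in the proof of Lemma \ref{le:comparison}) yields $\varphi\equiv 0$. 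Weak solutions would then be recovered by approximating $u_0$ by $T(\delta)u_0\in D(L_\alpha)$, which retains the lower bound $\varepsilon$ because the Ornstein-Uhlenbeck semigroup $T(\delta)$ is Markov.

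I would adopt the first route, which is essentially a one-line observation. I do not anticipate any serious obstacle: the only mild technicality — the composition with the non-$C^1$ map $y\mapsto y_-$ in the integration-by-parts formula — appears only in the energy variant and is entirely bypassed by the linearity argument.
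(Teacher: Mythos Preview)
Your first route is correct and is exactly the argument the paper uses: the paper simply notes that the constant $\varepsilon$ is the unique solution with initial datum $\varepsilon$, which combined with the positivity statement in Theorem~\ref{t:wp} (applied to $u_0-\varepsilon\ge 0$) gives $u(t)\ge\varepsilon$. Your write-up just makes the linearity/decomposition explicit.
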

\begin{proof}
The assertion follows from Theorem \ref{t:wp}, taking into account that the constant $\varepsilon$ is the unique solution to problem \eqref{eq:Volterra_diff1} when the initial condition is $\varepsilon$.
\end{proof}

\begin{lemma}[Invariance]
\label{l:conservation}
Let $u_0\in L^2(\ga)$. Then, the weak solution $u$ to problem \eqref{eq:Volterra_diff1} satisfies 
\begin{equation}\label{eq:invariance}
\int_{\R^d} u(t)\,d\ga=\int_{\R^d} u_0d\ga,
\quad\text{for any}\ t\ge0.
\end{equation}
\end{lemma}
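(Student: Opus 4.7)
The plan is to reduce the $d$-dimensional invariance to a one-dimensional scalar Volterra equation by integrating the PDE against $d\ga$, and then invoke Lemma \ref{le:composk}(iv). First I would establish the result for strong solutions (that is, $u_0\in D(L_\alpha)$), and then extend to arbitrary $u_0\in L^2(\ga)$ by a density argument, exploiting the fact that $\ga$ is a \emph{probability} measure so that $L^2(\ga)\hookrightarrow L^1(\ga)$.

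Fix $u_0\in D(L_\alpha)$ and let $u\in C^1([0,\infty);L^2(\ga))\cap C([0,\infty);D(L_\alpha))$ be the strong solution given by Theorem \ref{t:wp}. Set $F(t):=\int_{\R^d} u(t)\,d\ga$. Since $\dot u\in C([0,\infty);L^2(\ga))$, $\dot F(t)=\int_{\R^d}\dot u(t)\,d\ga$ is well defined and continuous; and because $\tau\mapsto \dot u(\tau)$ is continuous in $L^2(\ga)$ (hence in $L^1(\ga)$), Fubini's theorem yields
\begin{equation*}
\int_{\R^d}(k*\dot u)(t)\,d\ga=\int_0^t k(t-\tau)\Big(\int_{\R^d}\dot u(\tau)\,d\ga\Big)d\tau=(k*\dot F)(t).
\end{equation*}
Next, using that $1\in H^{1}(\ga)$ (a probability measure is finite) and the identity $\L_\alpha(f,g)=-(L_\alpha f,g)_{L^2(\ga)}$ recorded in Subsection \ref{ss:op_orns_uhl}, testing against the constant function $g\equiv 1$ gives
\begin{equation*}
\int_{\R^d} L_\alpha u(t)\,d\ga=-\L_\alpha(u(t),1)=-\int_{\R^d}\nabla u(t)\cdot\nabla 1\,d\ga=0.
\end{equation*}
(Alternatively, apply Lemma \ref{le:integration_by_parts}(ii) with $\Phi\equiv 1$.) Integrating \eqref{eq:Volterra_diff1} against $d\ga$ and combining the last two displays therefore produces the scalar Volterra equation
\begin{equation*}
\dot F(t)+(k*\dot F)(t)=0,\qquad F(0)=\int_{\R^d}u_0\,d\ga.
\end{equation*}

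At this point Lemma \ref{le:composk}(iv), applied with $f\equiv 0$ and $u_0$ replaced by $F(0)$, states that this problem is equivalent to $F(t)=F(0)+b*0=F(0)$ for all $t\ge0$, which is precisely \eqref{eq:invariance} in the strong case. Finally, to handle a general $u_0\in L^2(\ga)$, pick a sequence $u_{0,k}\in D(L_\alpha)$ with $u_{0,k}\to u_0$ in $L^2(\ga)$; by the construction of the weak solution in the proof of Theorem \ref{t:wp} (via the resolvent $S(t)$), the corresponding strong solutions $u_k(t)=S(t)u_{0,k}$ converge to $u(t)=S(t)u_0$ in $L^2(\ga)$, hence also in $L^1(\ga)$ because $\ga(\R^d)=1$. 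Passing to the limit in the identity $\int_{\R^d}u_k(t)\,d\ga=\int_{\R^d}u_{0,k}\,d\ga$ yields \eqref{eq:invariance}.

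The only delicate points are the Fubini interchange and the vanishing of the $L_\alpha$-integral; both are essentially immediate thanks to the strong-solution regularity and the symmetric Dirichlet form structure of $L_\alpha$, so I do not anticipate a substantive obstacle. The density step is routine once Theorem \ref{t:wp} is in hand.
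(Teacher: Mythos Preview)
Your proof is correct and follows essentially the same approach as the paper: integrate the equation against $d\ga$ for strong solutions, use the Dirichlet-form identity (equivalently Lemma \ref{le:integration_by_parts}(ii) with $\Phi\equiv1$) to kill the $L_\alpha$ term, solve the resulting scalar Volterra equation, and then pass to weak solutions by density. The only cosmetic difference is that you conclude $F\equiv F(0)$ via Lemma \ref{le:composk}(iv) with $f\equiv0$, whereas the paper invokes uniqueness for \eqref{integral} to get $\dot F\equiv0$ directly; these are equivalent one-line observations.
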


\begin{proof}
First, we consider $u_0\in D(L_\alpha)$. By Theorem \ref{t:wp} $u$ is the strong solution to problem \eqref{eq:Volterra_diff1}.
Integrating the equation in \eqref{eq:Volterra_diff1} over $\R^d$, one has
\begin{equation*}
\frac{d}{dt}\int_{\R^d} u(t)d\ga+k*\left(\frac{d}{dt}\int_{\R^d} u(t) d\ga\right)= \int_{\R^d} L_{\alpha} u(t) d\ga.
\end{equation*}
Applying Lemma \ref{le:integration_by_parts}-(ii) with $\Phi\equiv1$ we get
\begin{equation*}
\int_{\R^d} L_{\alpha} u(t)d\ga=0,
\end{equation*}
and hence
\begin{equation*}
\frac{d}{dt}\int_{\R^d} u(t)d\ga+k*\left(\frac{d}{dt}\int_{\R^d} u(t) d\ga\right)= 0.
\end{equation*}
Thanks to the uniqueness of the solutions of integral equations \eqref{integral}, we have
\begin{equation*}
\frac{d}{dt}\int_{\R^d} u(t)d\ga\equiv 0,
\end{equation*}
that is \eqref{eq:invariance}.

The general assertion for $u_0\in L^2(\ga)$ follows by means of approximation arguments.
\end{proof}

\begin{proof}[Proof of Theorem \ref{t:decay}]
First, we prove the statement assuming the initial datum $u_0$ more regular, that is
\begin{equation}
\label{eq:u_0_smoother}
u_0\in D(L_{\alpha}), \qquad u_0\geq \varepsilon \ \ d\ga-\text{a.e.}.
\end{equation}
By Theorem  \ref{t:wp} problem \eqref{eq:Volterra_diff1} admits a unique strong solution $u$. Moreover, thanks to  Lemma \ref{le:positivity} one has $u(t)\geq\varepsilon$ $d\ga$-- a.e. for any $t\geq 0$. Therefore, we can apply inequality \eqref{eq:inequalityConvexity} with $\Phi(\tau)=\tau\log (\tau)$, $\tau>0$, to get 
\begin{equation*}
\frac{d}{dt}\Phi(u(t))+k*\Big(\frac{d}{dt}\Phi(u)\Big)(t)\leq \Phi'(u(t)) \big(\dot{u}+k*\dot{u}\big)(t).
\end{equation*}
Integrating the above inequality, thanks also to the equation in \eqref{eq:Volterra_diff1}, we obtain
\begin{equation*}
\begin{aligned}
&\int_{\R^d}  \frac{d}{dt}\Phi(u(t))+k*\Big(\frac{d}{dt}\Phi(u)\Big)(t)\ d\g_{\alpha}\\
&\leq \int_{\R^d} \Phi'(u(t)) \big(\dot{u}+k*\dot{u}\big)(t)\ d\g_{\alpha}
=\int_{\R^d}\Phi'(u(t))L_{\alpha} u(t)\ d\ga.
\end{aligned}
\end{equation*}
Since $\Phi'(u(t))=\ln u(t)+1\in L^2(\ga)$ and $\Phi''(u(t))=\frac{1}{u(t)}\in L^{\infty}(\ga)$, one can apply Lemma \ref{le:integration_by_parts}-(ii) to have
\begin{equation}\label{eq:intpart2}
\int_{\R^d}\Phi'(u(t))L_{\alpha} u(t)\ d\ga
=-\int_{\R^d}\frac{|\nabla u(t)|^2}{u(t)} d\ga,
\end{equation}
and hence
\begin{equation}
\label{eq:identity_1}
\int_{\R^d}  \frac{d}{dt}\Phi(u(t))+k*\Big(\frac{d}{dt}\Phi(u)\Big)(t)\ d\g_{\alpha}
\leq -\int_{\R^d}\frac{|\nabla u(t)|^2}{u(t)} d\ga.
\end{equation}
By \eqref{eq:LSI} applied to the function $\sqrt{u(t)}$ we have
\begin{equation*}
-\int_{\R^d}\frac{|\nabla u(t)|^2}{u(t)} d\ga\le - 2\alpha \Ent (u(t)).
\end{equation*}
Combining the above inequality with \eqref{eq:identity_1} one has
$$
\int_{\R^d}  \frac{d}{dt}\Phi(u(t))+k*\Big(\frac{d}{dt}\Phi(u)\Big)(t)\ d\g_{\alpha} \leq - 2\alpha \Ent (u(t)).
$$
Since 
\begin{equation*}
\Ent(u(t))=\int_{\R^d} \Phi(u(t))d\ga-\Phi\Big(\int_{\R^d} u(t)d\ga\Big)
\end{equation*}
and by Lemma \ref{l:conservation} the function $\int_{\R^d} u(t)d\ga$ is constant, we have
\begin{equation}\label{eq:Dent}
\frac{d}{dt}\Ent(u(t))=\int_{\R^d}  \frac{d}{dt}\Phi(u(t)).
\end{equation}
Therefore 
\begin{equation*}
\frac{d}{dt}\Ent(u(t))+k*\Big(\frac{d}{dt}\Ent(u)\Big)(t)+ 2\alpha\Ent (u(t))\leq  0.
\end{equation*}
Finally, taking into account \eqref{eq:s_mu} for $\mu=2\alpha$, that is 
\begin{equation*}
\dot{s}_{2\alpha}(t)+k*\dot{s}_{2\alpha}(t)+2\alpha s_{2\alpha}(t)=0, \quad
\quad s_{2\alpha}(0)=1\,,
\end{equation*}
we can apply Lemma \ref{le:comparison} to obtain inequality \eqref{eq:dec+op} for any $u_0$ satisfying \eqref{eq:u_0_smoother}.

In the general case we consider $u_0\in L^2(\ga)$, $u_0\geq 0$ $d\ga$-- a.e., and $u$  the weak solution to problem \eqref{eq:Volterra_diff1}. By means of the usual techniques of convolution and cut-off
we can construct a sequence $\{u_{0k}\}$ of functions belonging to $C^\infty_c(\R^d)$ such that 
\begin{equation*}
u_{0k}\geq 0\ d\ga-a.e.\quad \ \text{and} \quad u_{0k}\xrightarrow[k\to\infty]{} u_0 \quad \text{ in }L^2(\ga).
\end{equation*}
Since $u_{0k}+\frac1k$ satisfy  \eqref{eq:u_0_smoother}, denoted by $u_k$ the strong solution to problem \eqref{eq:Volterra_diff1} with initial datum $u_{0k}+\frac1k$, we have
\begin{equation}\label{eq:entk}
\Ent(u_k(t)) 
\leq s_{2\alpha}(t)\Ent\Big(u_{0k}+\frac1k\Big),
\qquad k\in\N.
\end{equation}
Thanks to $u_k(t)\xrightarrow[k]{} u(t)$ in $L^2(\ga)$, up to extract a subsequence, we can assume that $u_k(t)\xrightarrow[k]{} u(t)$ d$\ga$-- a.e. and  $|u_k(t)|\le w(t)$, with $w(t)\in L^2(\ga)$.
Since for some $C>0$ one has $\tau|\ln \tau|\leq C(1+\tau^2)$, $\tau>0$, we can apply Lebesgue dominated convergence theorem to get
\begin{equation*}
\lim_{k\to \infty}\Ent(u_k(t)) =\Ent(u(t)).
\end{equation*}
Similarly, applying again Lebesgue dominated convergence theorem, we also have
\begin{equation*}
\lim_{k\to \infty}\Ent\Big(u_{0k}+\frac1k\Big) =\Ent(u_{0}),
\end{equation*}
and hence, letting $k\to \infty$ in \eqref{eq:entk}, we obtain that inequality \eqref{eq:dec+op} holds.

To prove the optimality of the constant, we assume that, for $u_0$ satisfying \eqref{eq:u_0_smoother} and some $\mu>0$, we have
\begin{equation}
\label{eq:entropy_decay_beta}
\Ent(u(t))\leq s_{\mu}(t)\Ent (u_0),\qquad t\ge0.
\end{equation}
Computing \eqref{eq:Dent} at $t=0$, thanks also to \eqref{eq:Volterra_diff1} and \eqref{eq:intpart2} for $t=0$, one obtains,
\begin{equation}
\label{eq:t_2_step_1}
\frac{d}{dt}\Ent(u(t))\Big|_{t=0}
=\int_{\R^d} \Phi'(u_0)L_{\alpha} u_0d\ga=-\int_{\R^d} \frac{|\nabla u_0|^2}{u_0}d\ga.
\end{equation}
To estimate the left-hand side of \eqref{eq:t_2_step_1}, we note that by \eqref{eq:entropy_decay_beta} it follows
$$
\Ent (u(t))-\Ent(u_0)\leq \big(s_{\mu}(t)-1\big)\Ent(u_0),
$$
and hence, dividing for $t>0$ and sending $t\downarrow 0$, we obtain
\begin{equation}
\label{eq:t_2_decay_entropy}
\frac{d}{dt}\Ent(u(t))\Big|_{t=0}\leq \dot{s}_{\mu}(0) \Ent(u_0).
\end{equation}
Combining \eqref{eq:t_2_step_1} with \eqref{eq:t_2_decay_entropy} and taking into account that $\dot{s}_{\mu}(0)=-\mu $, see \eqref{eq:s_mu}, we get 
\begin{equation}\label{eq:step_ent_C}
\Ent(u_0)
\leq \frac1\mu\int_{\R^d} \frac{|\nabla u_0|^2}{u_0}d\ga,
\end{equation}
that is, the Entropy-Fisher Information Inequality holds for $u_0$ satisfying \eqref{eq:u_0_smoother}.
To apply Lemma \ref{l:two_log_sob} we have to prove \eqref{eq:step_ent_C} for any $u_0\in H^1(\ga)$.
To this end, first we fix $u_0\in D(L_{\alpha})$, $u_0\geq 0$ d$\ga$-- a.e.. Since \eqref{eq:step_ent_C} holds for $u_0+\frac{1}{k}$,  $k\in\N$, we have
$$
\Ent\Big(u_0+\frac{1}{k}\Big)\leq 
\frac{1}{\mu} \int_{\R^d} \frac{|\nabla u_0|^2}{u_0+\frac{1}{k}}\ d\ga\leq
\frac{1}{\mu} \int_{\R^d} \frac{|\nabla u_0|^2}{u_0}\ d\ga.
$$
By Lebesgue dominated convergence theorem, letting $k\to \infty$ in the above inequality we obtain \eqref{eq:step_ent_C}. Using again usual approximation arguments we deduce that \eqref{eq:step_ent_C} also holds for any $u_0\in H^1(\ga)$, $u_0\geq 0$ d$\ga$-- a.e.. Finally, we are able to apply
 Lemma \ref{l:two_log_sob}: the Logarithmic Sobolev Inequality holds with constant $\frac4\mu$. Therefore, since the constant $\frac2\alpha$ in \eqref{eq:LSI} is optimal, then we get $\frac2\alpha\le\frac4\mu$, that is $\mu\le2\alpha$.

\end{proof}

\section{Conclusions and extensions}
\label{s:conclusion}
In this article we study the effect of a time memory on the entropy decay of solutions to \eqref{eq:Volterra_diff}.
Our main results concern the well-posedness and optimal entropy decay, see Theorems \ref{t:wp} and \ref{t:decay}. Our assumption  \eqref{ass:kernel} on $k$ allows us to consider the stretched exponential functions \eqref{eq:caputo_fabrizio}, Caputo-Fabrizio operators and power-law kernels \eqref{eq:power_law}. Theorem \ref{t:decay} shows that the entropy decay of solutions to \eqref{eq:Volterra_diff} is governed by the function $s_{2\alpha}$, which depends on the kernel $k$, because $s_{2\alpha}$ is  the solution of the problem
\begin{equation}\label{se4:rates}
\dot{s}_{2\alpha}(t)+k*\dot{s}_{2\alpha}(t)+2\alpha s_{2\alpha}(t)=0, \quad
\quad s_{2\alpha}(0)=1\,.
\end{equation}
In Section \ref{s:example}, we  explicitly compute the solution $s_{2\alpha}$ of \eqref{se4:rates} when $k(t)=\nu e^{-t}$, that is the case of Caputo-Fabrizio operators. For general stretched exponential and power-law kernels we implement numerical schemes to examine the behaviuor of $s_{2\alpha}$. 
As Figures \ref{fig:stretched} and \ref{fig:power_law} show,  the effect of the memory in \eqref{eq:Volterra_diff} weakens the decay of the entropy with respect to the case without memory $k\equiv 0$, in accordance with the physical behaviour of some materials, see \cite{CF15}. 

The methods used in Section \ref{s:proof} seem flexible enough to study \eqref{eq:Volterra_diff} in the case the Ornstein-Uhlenbeck operator is replaced by the operator
$
\Delta -\nabla W \cdot \nabla $ where $W$ is a potential.
The latter type of operators and the relative Logarithmic  Sobolev Inequality have been considered in \cite{L99} under suitable assumptions on the potential $W$. In this paper we consider the case $W(x)=\frac{\alpha}{2}|x|^2$. 

Another possible extension is the study of the decay of a $\Phi$-entropy defined as
\begin{equation}\label{eq:entphi}
\Ent_{\Phi}f:=\int_{\R^d} \Phi(f)d\ga  - \Phi\Big(\int_{\R^d} f d\ga\Big),
\end{equation}
where $\Phi:\U\to \R$ and $f $ takes its values in $\U$, for details we refer to \cite[Section 7.6]{MarkovGeometry}. 
In the case $\Phi(\tau)=\tau\ln \tau$ and $\U=(0,\infty)$ the definition \eqref{eq:entphi} coincide with \eqref{eq:def_entropy_alpha}.


\begin{thebibliography}{10}

\bibitem{MarkovGeometry}
D.~Bakry, I.~Gentil, and M.~Ledoux.
\newblock {\em Analysis and geometry of {M}arkov diffusion operators}, volume
  348 of {\em Grundlehren der Mathematischen Wissenschaften}.
\newblock Springer, Cham, 2014.

\bibitem{CF15}
M.~Caputo and M.~Fabrizio.
\newblock A new definition of fractional derivative without singular kernel.
\newblock {\em Progr. Fract. Differ. Appl}, 1:1--13, 2015.

\bibitem{CF17}
M.~Caputo and M.~Fabrizio.
\newblock On the notion of fractional derivative and applications to the
  hysteresis phenomena.
\newblock {\em Mecc.}, 52:3043--3052, 2017.

\bibitem{CP90}
P.~Cl\'{e}ment and J.~Pr\"{u}ss.
\newblock Completely positive measures and {F}eller semigroups.
\newblock {\em Math. Ann.}, 287:73-–105, 1990.

\bibitem{DSG18}
M.~A.~F. dos Santos and I.~S. Gomez.
\newblock A fractional {F}okker-{P}lanck equation for non-singular kernel
  operators.
\newblock {\em J. Stat. Mech. Th. Exp.} 12:1--13, 2018.

\bibitem{FDA19}
P.~A. Feulefack, J.~D. Djida, and A.~Abdon.
\newblock A new model of groundwater flow within an unconfined aquifer:
  application of {C}aputo-{F}abrizio fractional derivative.
\newblock {\em Disc. Contin. Dyn. Syst. Ser. B}, 24:3227--3247, 2019.

\bibitem{FJLB18}
M.~A. Firoozjaee, H.~Jafari, A.~Lia, and D.~Baleanu.
\newblock Numerical approach of {F}okker-{P}lanck equation with
  {C}aputo-{F}abrizio fractional derivative using {R}itz approximation.
\newblock {\em J. Comput. Appl. Math.}, 339:367--373, 2018.

\bibitem{GALLAMRRAM16}
J.~F. G\'{o}mez-Aguilar, M.~G. L\'{o}pez-L\'{o}pez, V.~M.
  Alvarado-Mart\'{\i}nez, J.~Reyes-Reyes, and M.~Adam-Medina.
\newblock Modeling diffusive transport with a fractional derivative without
  singular kernel.
\newblock {\em Phys. A}, 447:467--481, 2016.

\bibitem{GLS90}
G.~Gripenberg, S.-O. Londen, and O.~Staffans.
\newblock {\em Volterra integral and functional equations}, volume~34 of {\em
  Encyc. of Math. Appl.}.
\newblock Cambridge University Press, Cambridge, 1990.

\bibitem{Gross75}
L.~Gross.
\newblock Logarithmic {S}obolev inequalities.
\newblock {\em Amer. J. Math.}, 97:1061--1083, 1975.

\bibitem{H67}
L.~H\"{o}rmander. 
\newblock Hypoelliptic second order differential equations.
\newblock {\em  Acta Math.} 119:147-171, 1967.

\bibitem{KSVZ16}
J.~Kemppainen, J.~Siljander, V.~Vergara, and R.~Zacher.
\newblock Decay estimates for time-fractional and other non-local in time
  subdiffusion equations in {$\Bbb{R}^d$}.
\newblock {\em Math. Ann.}, 366:941--979, 2016.

\bibitem{KP19}
J.~Kemppainen and R.~Zacher.
\newblock Long-time behavior of non-local in time {F}okker-{P}lanck equations
  via the entropy method.
\newblock {\em Math. Mod. Meth. Appl. Sci.}, 29:209--235, 2019.

\bibitem{KHB19}
M.~A. Khan, Z.~Hammouch, and D.~Baleanu.
\newblock Modeling the dynamics of hepatitis {E} {\it via} the
  {C}aputo-{F}abrizio derivative.
\newblock {\em Math. Mod. Nat. Phenom.}, 14:Art. 311, p. 19, 2019.

\bibitem{L99}
M.~Ledoux.
\newblock Concentration of measure and logarithmic {S}obolev inequalities.
\newblock In {\em S\'{e}minaire de {P}robabilit\'{e}s, {XXXIII}}, Springer volume 1709
  of {\em Lect. Notes Math.}, p. 120--216, 1999.

\bibitem{JJL}
J.~J. Levin.
\newblock Resolvents and bounds for linear and nonlinear {V}olterra equations.
\newblock {\em Trans. Amer. Math. Soc.}, 228:207--222, 1977.

\bibitem{MauroMauro18}
 J.~C. Mauro, and Y.~Z. Mauro. \newblock On the Prony series representation of stretched exponential relaxation.
\newblock {\em Phys. A}, 506:75-87, 2018.

\bibitem{MPP02}
G.~Metafune, D.~Pallara, and E.~Priola.
\newblock Spectrum of {O}rnstein-{U}hlenbeck operators in {$L^p$} spaces with
  respect to invariant measures.
\newblock {\em J. Funct. Anal.}, 196:40–-60, 2002.

\bibitem{Nunziato71}
J.~W.~Nunziato.
\newblock On heat conduction in materials with memory.
\newblock {\em Quart. Appl. Math.}, 29: 187--204, 1971.

\bibitem{Pruss87}
J.~Pr\"{u}ss.
\newblock Positivity and regularity of hyperbolic {V}olterra equations in
  {B}anach spaces.
\newblock {\em Math. Ann.}, 279:317--344, 1987.

\bibitem{EvolutionaryIntegral}
J.~Pr\"{u}ss.
\newblock {\em Evolutionary integral equations and applications}.
\newblock Mod. Birk. Class. 
\newblock Reprint of the 1993 edition.

\bibitem{QSS10}
A.~Quarteroni, R.~Sacco, and F.~Saleri.
\newblock {\em Num. math.}, volume~37.
\newblock Springer Science \& Business Media, 2010.

\bibitem{SS19}
A.~S. Shaikh and K.~Sooppy~Nisar.
\newblock Transmission dynamics of fractional order typhoid fever model using
  {C}aputo-{F}abrizio operator.
\newblock {\em Chaos Sol. Fract.}, 128:355--365, 2019.

\bibitem{T19}
V.~E. Tarasov.
\newblock Caputo-{F}abrizio operator in terms of integer derivatives: memory or
  distributed lag?
\newblock {\em Comput. Appl. Math.}, 38: Art. 113, p. 15, 2019.

\bibitem{VZ15}
V.~Vergara and R.~Zacher.
\newblock Optimal decay estimates for time-fractional and other nonlocal
  subdiffusion equations via energy methods.
\newblock {\em SIAM J. Math. Anal.}, 47:210--239, 2015.
\end{thebibliography}

\def\polhk#1{\setbox0=\hbox{#1}{\ooalign{\hidewidth
  \lower1.5ex\hbox{`}\hidewidth\crcr\unhbox0}}} \def\cprime{$'$}

\end{document}